   \renewcommand{\a}{\alpha}
  \newcommand{\w}{\omega}
   \newcommand{\R}{\mathbb{R}}
   \newcommand{\C}{\mathbb{C}}
  \newcommand{\Z}{\mathbb{Z}}
   \newcommand{\cF}{\mathcal{F}}
   \newcommand{\cJ}{\mathcal{J}}
\newtheorem{proposition}{{\bf Proposition}}
\newtheorem{lemma}{{\bf Lemma}}
\newtheorem{theorem}{{\bf Theorem}}
\newtheorem{corollary}{{\bf Corollary}}
\newtheorem{remark}{Remark}
\begin{document}

\author[J.\ C.\ Marrero]{Juan C.\ Marrero}
\address{Juan C.\ Marrero:
ULL-CSIC Geometr\'{\i}a Diferencial y Mec\'anica Geom\'etrica\\
Departamento de Matem\'atica Fundamental, Facultad de
Ma\-te\-m\'a\-ti\-cas, Universidad de la Laguna, La Laguna,
Tenerife, Canary Islands, Spain} \email{jcmarrer@ull.es}

\author[D.\ Mart{\'\i}nez  Torres]{David  Mart{\'\i}nez  Torres}
\address{David Mart{\'\i}nez: Centro de An\'{a}lise Matem\'{a}tica, Geometria e Sistemas Din\^{a}micos,
Departamento de Matem\'atica, Instituto Superior T\'ecnico, Av. Rovisco Pais, 1049-001
Lisboa, Portugal}
 \email{martinez@math.ist.utl.pt}

 \author[E. \ Padr\'on]{Edith Padr\'on}
\address{Edith Padr\'on:
ULL-CSIC Geometr\'{\i}a Diferencial y Mec\'anica Geom\'etrica\\
Departamento de Matem\'atica Fundamental, Facultad de
Ma\-te\-m\'a\-ti\-cas, Universidad de la Laguna, La Laguna,
Tenerife, Canary Islands, Spain} \email{mepadron@ull.es}

\thanks{{\it 2010 Mathematics
Subject Classification. 53D15, 53A30, 53C55} \\ The first and third authors have been
partially supported by MEC (Spain) grants  MTM2009-13383,
MTM2009-08166-E, and the project of the Canary government
SOLSUBC200801000238. The second author has been partially supported by
 the Funda\c{c}\~{a}o para a Ci\^{e}ncia e a Tecnologia (FCT /
Portugal). The authors  would like to thank Prof. 
Liviu Ornea and M. Verbitsky for helpful comments  related to locally conformal symplectic geometry, and the referee 
for his/her very valuable comments.}

\keywords{Universal models, $1$-forms, locally conformal symplectic manifolds, locally
conformal K\"ahler manifolds, contact structures, embeddings,
reduction}

\title[{Universal models for locally conformal symplectic structures}]
{Universal models via embedding and reduction for locally conformal symplectic structures}

 \begin{abstract}
We obtain universal models for several types of locally conformal
symplectic manifolds via pullback or reduction. The relation with
recent embedding results for locally conformal K\"ahler manifolds
is discussed.

\end{abstract}
\maketitle

\section{Introduction and statement of results}

A  manifold $M$ endowed with a nondegenerate $2$-form $\Phi$  is
an  almost symplectic manifold. An almost symplectic manifold
$(M,\Phi)$ is said to be {\it locally conformal symplectic
(l.c.s.)} if for each $x\in M$, there exist an open neighborhood
$U$ of $x$ and a function $\sigma:U\to \R$ such that
$(U,e^{-\sigma}\Phi)$ is a symplectic manifold, i.e.,
$d(e^{-\sigma}\Phi)=0$ (\cite{GL,Va85}). This type of manifolds are included in the category of Jacobi manifolds.
 In fact,  the leaves of the characteristic foliation of a Jacobi manifold are contact or l.c.s. manifolds
 (see, for instance \cite{DLM, GL, Ki}). For manifolds of
dimension greater than 2, an assumption we make from now on, the l.c.s. condition is equivalent to 
\begin{equation}\label{eq:twclosed}
d\Phi=\w\wedge\Phi,
 \end{equation}
 where $\w$ is a closed 1-form, the {\it Lee  $1$-form}. The 2-form $\Phi$ is referred to  as a {\it l.c.s. form}. 
Recalling that any closed 1-form defines a twisted de Rham cohomology, equation
\ref{eq:twclosed} describes a l.c.s. form as a non-degenerate 2-form which is closed in a
twisted de Rham cohomology complex.  This viewpoint is relevant to draw analogies with symplectic geometry.

If $\Phi$ is a l.c.s. form, then so is $f\Phi$ for any $f\in C^\infty(M)$ no-where vanishing.
The l.c.s. forms $\Phi$ and $f\Phi$ are said to belong to the same {\it conformal class}. We will always assume $f$ to be 
positive, so our conformal classes will be -strictly speaking- positive conformal classes.

 A salient feature of l.c.s. structures is that they provide a framework for Hamiltonian mechanics more general than
  the one provided by symplectic structures (see \cite{Va85} or for instance, the recent paper by Marle \cite{Ma} 
where the theory of conformally Hamiltonian vector fields was applied to the Kepler problem).

 It is  natural to investigate up to which extent properties of symplectic manifolds and techniques in symplectic
 geometry generalize to l.c.s. geometry. In the symplectic context, for instance, 
 there is a noteworthy work on embeddings (see, for example, \cite{Ti}) and on reduction 
(see the book by Ortega and Ratiu \cite{OR} and references therein; see also the book by 
Marsden et al \cite{MMOPR} for Hamiltonian reduction by stages).  
 
For l.c.s. geometry, some results on the group of automorphisms
 of a l.c.s. structure \cite{HK99}, on reduction \cite{HK01}, on Moser stability type results \cite{BK09} and on 
existence of l.c.s. structures on open 
manifolds via a h-principle \cite{FF10} have been obtained.
 Another very active line of research in the subject is centered in  {\it locally conformal K\"ahler (l.c.K.)} manifolds.
 These are complex manifolds with a Hermitian metric locally conformal to a K\"ahler one; the underlying l.c.s. structure
 is defined by the 2-form associated to the Hermitian metric. The role played by l.c.K manifolds within l.c.s. manifolds
 is analogous to the one of K\"ahler  manifolds within symplectic ones.  Among the very remarkable recent results in l.c.K.
 geometry, one finds an analog of Kodaira embedding theorem for a subclass of l.c.K manifolds \cite{OV10}.

Kodaira embedding theorem is a very good example of a result in
K\"ahler geometry, which with the appropriate formulation holds
also in symplectic  geometry. Namely, Tischler \cite{Ti} proves
that any integral, compact, symplectic manifold symplectically
embeds in some projective space. In other words, projective spaces
with the integral Fubini-Study symplectic form are universal
models for integral symplectic structures in compact manifolds.
With regard to in which sense Tischler embedding relates to
Kodaira's result, it is known that in general one cannot find
holomorphic embeddings which at the same time pull back the
Fubini-Study metric to a (suitable multiple) of the given K\"ahler
metric. But one easily goes from the holomorphic to the symplectic
embedding by applying Moser stability to  the convex combination
of the two cohomologous K\"ahler forms.

Motivated by the aforementioned results of Tischler, and Ornea and Verbitsky, in this paper we take up the problem
of investigating the existence of compact universal models for l.c.s. structures. Roughly, this amounts to finding families
of compact l.c.s. manifolds -which will be rather special- together with a procedure -either pullback or reduction
 (though for the latter compactness will be dropped)- which allows
us to produce any given l.c.s. structure under reasonable constraints.

Our first result provides a positive answer for  a type of l.c.s.
structures, exact l.c.s. structures with integral period lattice
on compact manifolds (see sections \ref{sec:tw} and  \ref{sec:lcs} for background on l.c.s. structures).

 \begin{theorem}\label{thm:emblcs} Let $(M,\Phi,\a)$ be a compact manifold of dimension $2n$ endowed
 with an exact l.c.s. structure, whose Lee form $\omega$ has integral period lattice. Then, for any $N\geq 4n+2,$ there exist
 an embedding
$\Psi\colon M\to S^{2N-1}\times S^1$
and a real number $c, c>0, $ such that
\begin{equation}\label{embedlcs}
\Psi^*(c\eta_N)=\alpha,\;\;\;\; \Psi^*(d\theta)=\omega,\;\;\;\;
\Psi^*(c\Phi_N)=\Phi,
\end{equation}
where $\eta_N$ is the standard contact $1$-form on $S^{2N-1}$,
$d\theta$ the standard integral 1-form on the circle and $\Phi_N$
the associated standard l.c.s. structure with integral period
lattice on  $ S^{2N-1}\times S^1$.

Using the language introduced in section \ref{sec:tw}, the embedding $\Psi$ is a full strict morphism
into $(S^{2N-1}\times S^1,d\theta)$ which pulls back the homothety class of $\eta_N$ into the homothety
class of $\alpha$ (and thus does the same for the l.c.s. forms).
\end{theorem}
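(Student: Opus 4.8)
\emph{Plan.} The strategy is to decouple the three identities in \eqref{embedlcs} and reduce everything to realizing the primitive $\alpha$ as the pullback of the contact form on a sphere. The key preliminary observation is that $\Phi_N=d\eta_N-d\theta\wedge\eta_N=d_{d\theta}\eta_N$ is the twisted differential of $\eta_N$ for the Lee form $d\theta$. Hence, once a map $\Psi$ satisfies $\Psi^*(d\theta)=\omega$ and $\Psi^*(c\eta_N)=\alpha$, one automatically gets
\[
\Psi^*(c\Phi_N)=\Psi^*\bigl(d_{d\theta}(c\eta_N)\bigr)=d\bigl(\Psi^*(c\eta_N)\bigr)-\Psi^*(d\theta)\wedge\Psi^*(c\eta_N)=d\alpha-\omega\wedge\alpha=\Phi,
\]
using the exactness hypothesis on $\Phi$ and the fact that pullback intertwines $d_{d\theta}$ with $d_\omega$ as soon as the Lee forms correspond. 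Thus the whole theorem follows from producing an embedding of the form $\Psi=(q,p)$ with $p^*(d\theta)=\omega$ and $q^*(c\eta_N)=\alpha$.

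The circle factor is immediate. Since $\omega$ is closed with integral period lattice it defines a class in $H^1(M;\Z)$ and hence, by integration, a smooth map $p\colon M\to S^1$ with $p^*(d\theta)=\omega$; this is the only place where integrality of the Lee form enters.

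The core is to realize the arbitrary $1$-form $\alpha$ as $q^*(c\eta_N)$. Writing $S^{2N-1}\subset\C^N$ and using that $\eta_N$ is a constant multiple of the restriction of the Liouville form $\mathrm{Im}(\bar z\,dz)$, we must find $q=(q_1,\dots,q_N)$ with $\sum_j|q_j|^2$ constant and with $\sum_j\mathrm{Im}(\bar q_j\,dq_j)$ equal to a prescribed positive multiple of $\alpha$. The construction rests on the identity $\mathrm{Im}\bigl(\overline{\chi e^{i\xi}}\,d(\chi e^{i\xi})\bigr)=\chi^2\,d\xi$, which shows that a single complex coordinate $\chi e^{i\xi}$ contributes the nonnegative multiple $\chi^2\,d\xi$ of a locally defined angle $d\xi$. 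Choosing a finite atlas with coordinate functions $\xi_k^i$ and a squared partition of unity $\sum_k\zeta_k^2=1$ subordinate to it, we write $\alpha=\sum_{k,i}\zeta_k^2\,\alpha_{k,i}\,d\xi_k^i$; adding a large constant $C_k$ on the support of $\zeta_k$ so that $\alpha_{k,i}+C_k>0$, each term splits as $\zeta_k^2(\alpha_{k,i}+C_k)\,d\xi_k^i-\zeta_k^2 C_k\,d\xi_k^i$, a difference of nonnegative multiples of $d(\pm\xi_k^i)$, realized by the smooth, compactly supported coordinates $\zeta_k\sqrt{\alpha_{k,i}+C_k}\,e^{i\xi_k^i}$ and $\zeta_k\sqrt{C_k}\,e^{-i\xi_k^i}$. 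Summing all these coordinates yields a map into some $\C^m$ whose pullback of $\mathrm{Im}(\bar z\,dz)$ equals a fixed multiple of $\alpha$, and a single balancing real coordinate $\sqrt{R^2-\sum_j|q_j|^2}$ with $R$ large restores a constant total norm without affecting the contact pullback; rescaling to the unit sphere turns that constant into the homothety factor $c$.

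Finally, to promote $q$ to an embedding while preserving the identities, append to it the coordinate functions of a Whitney embedding $M\hookrightarrow\R^{L}\subset\C^{L}$, taken with vanishing imaginary part so they contribute nothing to $\mathrm{Im}(\bar z\,dz)$ and leave $q^*(c\eta_N)=\alpha$ intact; re-balancing the norm and normalizing produces $\Psi=(q,p)$, which is injective and an immersion because its Whitney block already is, hence an embedding by compactness of $M$. By construction $\Psi^*(d\theta)=\omega$ and $\Psi^*(c\eta_N)=\alpha$, so $\Psi$ is the desired full strict morphism into $(S^{2N-1}\times S^1,d\theta)$ carrying the homothety class of $\eta_N$ to that of $\alpha$. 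The dimension bookkeeping fixing the optimal bound $N\ge 4n+2$ amounts to counting the coordinates above and selecting an efficient Whitney embedding; the genuinely delicate point, and the main obstacle, is precisely the constant-norm realization of an arbitrary $1$-form as a contact pullback—keeping every coordinate function smooth (so that the square roots and the local angles $\xi_k^i$ cause no trouble at the zeros of the cutoffs and on chart overlaps) while holding $\sum_j|q_j|^2$ exactly constant.
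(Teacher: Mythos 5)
Your reduction of the theorem to the two pullback identities is exactly the paper's: the circle factor comes from integrality of $\omega$, the identity $\Phi_N=d_{d\theta}\eta_N$ together with $\Phi=d_\omega\alpha$ makes the third equation in (\ref{embedlcs}) automatic, and appending real coordinates that contribute nothing to the contact form in order to upgrade the map to an embedding is sound. The genuine gap is in the core step: your partition-of-unity construction with local angle functions $\xi_k^i$ and the blocks $\zeta_k\sqrt{\alpha_{k,i}+C_k}\,e^{\pm i\xi_k^i}$ is a Narasimhan--Ramanan-type argument, and it cannot deliver the bound $N\geq 4n+2$, which is part of the statement being proved. Count the coordinates: a finite atlas of a compact $2n$-manifold needs on the order of $2n+1$ chart systems, each chart contributing $2n$ coordinate functions and hence $2\cdot 2n$ complex coordinates in your scheme, so your target dimension is of order $n^2$ (plus the Whitney block), not $4n+2$. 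The closing remark that the bound ``amounts to counting the coordinates above and selecting an efficient Whitney embedding'' is therefore false: the chart coordinates, not the Whitney block, dominate the count, and no choice of Whitney embedding repairs this. This is precisely the phenomenon the paper flags when comparing with \cite{NR61}: the universal-connection construction gives bounds of type $8n+3$, and the whole point of theorem \ref{t1} is a different argument that cuts the dimension down. (Your construction does prove the qualitative statement ``for some $N$,'' and the monotonicity in $N$ you would then need follows from the equatorial inclusion $S^{2N_0-1}\subset S^{2N-1}$, which pulls $\eta_N$ back to $\eta_{N_0}$ --- a point worth stating explicitly.)

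The missing idea is the paper's global two-step construction in the proof of theorem \ref{t1}: embed $M^{2n}$ in $\R^{4n}$ by Whitney, extend $\alpha$ to $\bar\alpha=\sum_{i=1}^{p}f_i\,dx_i$ with $p\le 4n$ on a relatively compact neighbourhood of the image, and use the \emph{ambient coordinates themselves} in place of your local angles, i.e.\ take the pairs $(x_i,f_i(x))$ as the real coordinate pairs of the map. Then $\eta$ pulls back to $\bar\alpha-d\varphi$ with $\varphi=\tfrac12\sum_k f_kx_k$, and a \emph{single} additional pair of the form $(\cdot\,,1)$ built from $\varphi$ corrects the exact discrepancy at the cost of one complex dimension; one balancing coordinate $\sqrt{r^2-\gamma}$ restores constant norm, and the final homothety produces the constant $c$. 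This costs one complex coordinate per ambient coordinate rather than per (chart, coordinate) pair, which is exactly what yields $N\geq 4n+2$, and injectivity of the resulting map comes for free from the block $(x_1,\dots,x_p)$, with no separate Whitney appendix needed. One further harmless slip in your write-up: with the paper's convention $\eta_N=\tfrac12\sum_j(y_j\,dx_j-x_j\,dy_j)=-\tfrac12\,\mathrm{Im}(\bar z\,dz)$, your identification of $\eta_N$ with a positive multiple of $\mathrm{Im}(\bar z\,dz)$ has the wrong sign; since your blocks realize nonnegative multiples of both $d\xi$ and $-d\xi$, this is pure bookkeeping, but as written the pullback would be $-\lambda\alpha$ rather than $\lambda\alpha$.
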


As a consequence of Theorem \ref{thm:emblcs}, we deduce the following result:
\begin{corollary}\label{cor}
Let $M$ be a compact manifold of dimension $2n$ endowed with a l.c.s. structure, whose Lee 
form $\omega$ is not zero in some point of $M$, it has integral period lattice and it is parallel 
with respect to a Riemannian metric on $M.$ Then, for any $N\geq 4n+2,$ there exist
 an embedding
$\Psi\colon M\to S^{2N-1}\times S^1$
and a real number $c, c>0, $ such that
$$
\Psi^*(c\eta_N)=\alpha,\;\;\;\; \Psi^*(d\theta)=\omega,\;\;\;\;
\Psi^*(c\Phi_N)=\Phi,
$$
where $\eta_N$ is the standard contact $1$-form on $S^{2N-1}$,
$d\theta$ the standard integral 1-form on the circle and $\Phi_N$
the associated standard l.c.s. structure with integral period
lattice on  $ S^{2N-1}\times S^1$.
\end{corollary}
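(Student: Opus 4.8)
The plan is to reduce the Corollary to Theorem \ref{thm:emblcs} by showing that, under the stated hypotheses, the l.c.s. structure is automatically exact. Recall from sections \ref{sec:tw} and \ref{sec:lcs} that the Lee form $\omega$ gives rise to the twisted differential $d_\omega=d-\omega\wedge\cdot$ on $\Omega^*(M)$, that $d\Phi=\omega\wedge\Phi$ is precisely the condition $d_\omega\Phi=0$, and that exactness of the structure means $\Phi=d_\omega\alpha$ for some $1$-form $\alpha$, i.e.\ $[\Phi]_\omega=0$ in the twisted (Morse--Novikov) cohomology $H^2_\omega(M)$. Thus it suffices to prove that $H^2_\omega(M)=0$; producing such an $\alpha$ then yields an exact l.c.s. triple $(M,\Phi,\alpha)$ to which Theorem \ref{thm:emblcs} applies verbatim, since the hypothesis that $\omega$ has integral period lattice is shared.

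To obtain the vanishing I would run a Bochner--Weitzenb\"ock argument exploiting that $\omega$ is parallel. First observe that $\nabla\omega=0$ forces $\omega$ to be harmonic ($d\omega=0=\delta\omega$) and of constant pointwise norm; since $\omega$ is nonzero at one point, $|\omega|^2=\lambda$ is a strictly positive constant, $\omega$ is nowhere vanishing, and $\omega^\sharp$ is a parallel, hence Killing, vector field. With respect to a compatible metric the formal adjoint of $d_\omega=d-\omega\wedge\cdot$ is $\delta_\omega=\delta-\iota_{\omega^\sharp}$, and writing $e=\omega\wedge\cdot$, $\iota=\iota_{\omega^\sharp}$ for the (mutually adjoint) exterior and interior multiplications, a direct expansion of the twisted Laplacian gives
\[
\Delta_\omega=d_\omega\delta_\omega+\delta_\omega d_\omega=\Delta+(e\iota+\iota e)-(d\iota+\iota d)-(e\delta+\delta e)=\Delta+\lambda-\mathcal{L}_{\omega^\sharp}-\mathcal{L}_{\omega^\sharp}^{*},
\]
using the pointwise identity $e\iota+\iota e=|\omega|^2\,\mathrm{Id}$ and Cartan's formula $d\iota+\iota d=\mathcal{L}_{\omega^\sharp}$. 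Since $\omega^\sharp$ is Killing it generates a one-parameter group of isometries, so $\mathcal{L}_{\omega^\sharp}$ is skew-adjoint on $L^2$ and the last two terms cancel. Hence $\Delta_\omega=\Delta+\lambda\,\mathrm{Id}$ with $\lambda>0$, so $\Delta_\omega$ has trivial kernel and, by Hodge theory for the twisted complex, $H^*_\omega(M)=0$; in particular $H^2_\omega(M)=0$.

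With $H^2_\omega(M)=0$ in hand the Corollary is immediate: the class $[\Phi]_\omega$ vanishes, so there is a $1$-form $\alpha$ with $\Phi=d_\omega\alpha=d\alpha-\omega\wedge\alpha$, making $(M,\Phi,\alpha)$ an exact l.c.s. manifold whose Lee form retains its integral period lattice. Theorem \ref{thm:emblcs} then produces the desired embedding $\Psi\colon M\to S^{2N-1}\times S^1$ and constant $c>0$. The one genuinely delicate point in this scheme is the Weitzenb\"ock step, and more precisely the recognition that \emph{parallelism} of the Lee form --- strictly stronger than the closedness possessed by every l.c.s. structure --- is exactly what is needed: it makes $|\omega|^2$ a positive constant and renders $\mathcal{L}_{\omega^\sharp}$ skew-adjoint, which together force the twisted Laplacian to be bounded below by a positive constant. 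Dropping parallelism, $H^2_\omega(M)$ may fail to vanish and the structure need not be exact, so the reduction to Theorem \ref{thm:emblcs} would break down.
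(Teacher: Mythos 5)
Your proposal is correct, and the overall reduction is exactly the one the paper uses: deduce that the l.c.s.\ structure is exact because the twisted cohomology vanishes under the parallelism hypothesis, then apply theorem \ref{thm:emblcs} verbatim (the integrality of the period lattice being untouched). The difference is in how the vanishing is obtained: the paper's proof is two lines, citing theorem 4.5 of \cite{LeLoMaPa} for $H^k_\omega(M)=\{0\}$ for all $k$, whereas you prove it from scratch. Your Weitzenb\"ock computation checks out: the formal adjoint of $d_\omega=d-\omega\wedge\cdot$ is indeed $\delta-\iota_{\omega^\sharp}$; the expansion $\Delta_\omega=\Delta+(e\iota+\iota e)-(d\iota+\iota d)-(e\delta+\delta e)$ is correct, with $e\iota+\iota e=|\omega|^2\,\mathrm{Id}$ pointwise and $d\iota+\iota d=\mathcal{L}_{\omega^\sharp}$ by Cartan's formula, while $(e\delta+\delta e)=\mathcal{L}_{\omega^\sharp}^*$ since $(d\iota)^*=\iota^*d^*=e\delta$ and $(\iota d)^*=\delta e$. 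Parallelism is used exactly where it must be: $\nabla\omega=0$ makes $|\omega|^2$ constant, the hypothesis that $\omega$ is nonzero at one point makes this constant $\lambda>0$, and $\omega^\sharp$ parallel is Killing, so its flow is by isometries and $\mathcal{L}_{\omega^\sharp}$ is $L^2$-skew-adjoint, cancelling the last two terms to give $\Delta_\omega=\Delta+\lambda\,\mathrm{Id}>0$. Since $d_\omega$ is a zeroth-order perturbation of $d$, the twisted complex is elliptic with the same symbol, Hodge theory applies, and $H^*_\omega(M)=\ker\Delta_\omega=0$. What your route buys is a self-contained argument (essentially reconstructing the content of the cited theorem in \cite{LeLoMaPa}) and transparency about why parallelism, rather than mere closedness, is the operative hypothesis --- consistent with Banyaga's examples of non-$d_\omega$-exact l.c.s.\ forms showing the conclusion fails in general; what the paper's citation buys is brevity. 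A minor remark: you only need $H^2_\omega(M)=0$, but your argument, like the cited theorem, gives acyclicity in all degrees at no extra cost.
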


The manifold $S^{2N-1}\times S^1$ admits many l.c.K. structures with integral period
lattice associated to diffeomorphisms with linear Hopf manifolds $H_A$ \cite{KO05}
(see section \ref{sec:lck} for background on l.c.K. structures). The standard l.c.s.
form with integral period lattice in  theorem \ref{thm:emblcs} underlies the l.c.K.
form associated to obvious diffeomorphisms to several diagonal Hopf manifolds. In
\cite{OV09,OV10} it is shown that any compact  l.c.K. manifold of complex dimension
at least 3 with automorphic potential -an appropriate generalization of  exact l.c.s. structures in the l.c.K. setting for which the underlying l.c.s. structure is exact-  admits a holomorphic embedding into a linear Hopf manifold $H_A$.

Our second result asserts that the relation between theorem
\ref{thm:emblcs} and Ornea and Verbitsky embedding, mimics the
relation between Tischler and Kodaira embeddings.

\begin{theorem}\label{thm:mosercor} Let $(J,\Phi_g,r)$ be a l.c.K.
structure with automorphic potential and integral period lattice on a
compact manifold $M$. Let $(M,J,\Phi_{g'},r')$ be the l.c.K. structure with
automorphic potential and integral period lattice induced by any of the holomorphic
 embeddings  $\Psi\colon (M,J)\rightarrow (H_A,J_A)$ in \cite{OV09,OV10}, where
 $(H_A,J_A)$ is endowed with a  l.c.K. structure $\Phi_A$ with integral period
 lattice as described in \cite{KO05,OV10a}. Then there exist  diffeomorphisms
 $\varphi\colon M\rightarrow M$ and  $\phi\colon H_A\rightarrow S^{2N-1}\times S^1$, such that
\begin{itemize}
 \item $\phi$ pulls back the standard l.c.s. form $\Phi_N$ on the sphere $S^{2N-1}\times S^1$  to the positive conformal
 class of $\Phi_A$.
\item $\varphi$ is isotopic to the identity and pulls back the l.c.s. form $\Phi_{g'}$ to the positive
 conformal class of $\Phi_g$.
\end{itemize}

Therefore,
\[(\phi\circ \Psi\circ \varphi)^*\Phi_N=f\Phi_g, \]
where $f$ is a strictly positive function. Equivalently, $\phi\circ \Psi\circ \varphi$ is a full morphism
 which pulls back the conformal twisted cohomology class of $\Phi_N$ into the 
conformal twisted cohomology class of $\Phi_g$.

The diffeomorphisms $\varphi$ and $\phi$ are constructed via the Moser stability result in \cite{BK09}.
\end{theorem}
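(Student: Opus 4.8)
The plan is to imitate, in the twisted setting, the passage from a holomorphic (Kodaira-type) embedding to a symplectic (Tischler-type) one: on each of the two manifolds carrying the relevant data there is a pair of l.c.K. forms sharing the same complex structure, and I would connect each such pair by a straight-line path of l.c.s. forms on which the Moser stability theorem of \cite{BK09} applies. The decisive feature, exactly as in the K\"ahler case, is that two l.c.K. forms that are positive $(1,1)$-forms for the same complex structure $J$ have a convex combination which is again a positive $(1,1)$-form, hence nondegenerate; once the Lee forms are arranged to coincide, this turns the whole segment into a genuine family of l.c.s. forms lying in a fixed conformal twisted cohomology class. I would build $\varphi$ and $\phi$ separately and then compose.

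For $\varphi$ I would work on $M$ with the two l.c.K. data $(J,\Phi_g)$ and $(J,\Phi_{g'})$, both having automorphic potential and integral period lattice. First I would normalise the Lee forms: within a conformal class the Lee form changes only by an exact $1$-form, and since both Lee forms represent the same integral de Rham class (the one dual to the $S^1$-monodromy of the minimal covering), a preliminary conformal rescaling brings them to a common closed form $\w$. Having an automorphic potential means each $\Phi$ is $d_\w$-exact, so both represent the zero conformal twisted cohomology class and in particular lie in the same class. The segment $\Phi_t=(1-t)\Phi_g+t\Phi_{g'}$ then consists of positive $(1,1)$-forms with Lee form $\w$, hence of l.c.s. forms with constant conformal twisted class, and \cite{BK09} yields a diffeomorphism $\varphi$ isotopic to the identity with $\varphi^*\Phi_{g'}=f_1\Phi_g$, $f_1>0$.

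For $\phi$ I would start from the classical diffeomorphism $\phi_0\colon H_A\to S^{2N-1}\times S^1$ identifying a linear Hopf manifold with the product, and compare $\phi_0^*\Phi_N$ with $\Phi_A$. Using the explicit description of the integral l.c.K. structure on $H_A$ in \cite{KO05,OV10a} together with the fact that $\Phi_N$ underlies the l.c.K. form of a diagonal Hopf manifold, I would verify that both forms have Lee form in the class of $d\theta$ and the same (trivial) conformal twisted cohomology class. To connect them I would deform the linear contraction $A$ to diagonal form inside the family of linear Hopf manifolds, which keeps the structures l.c.K. and hence the forms nondegenerate, and only then take a convex combination against $\Phi_N$ in the resulting common complex structure. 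Moser stability \cite{BK09} again provides a $\psi$ isotopic to the identity with $\psi^*\phi_0^*\Phi_N$ conformal to $\Phi_A$; setting $\phi=\phi_0\circ\psi$ gives $\phi^*\Phi_N=f_2\Phi_A$ with $f_2>0$.

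Finally, since $(M,J,\Phi_{g'},r')$ is the structure induced by $\Psi$, one has $\Psi^*\Phi_A=\Phi_{g'}$, and composing the three pullbacks gives
\[
(\phi\circ\Psi\circ\varphi)^*\Phi_N=\varphi^*\Psi^*(f_2\Phi_A)=\bigl((f_2\circ\Psi\circ\varphi)\,f_1\bigr)\,\Phi_g=f\Phi_g,
\]
with $f>0$, which is the asserted full-morphism statement. I expect the genuine difficulty to lie entirely in the $\phi$ step: matching the Lee forms and conformal twisted classes of $\Phi_A$ and $\Phi_N$ under $\phi_0$ through the explicit Hopf data, and above all exhibiting a nondegenerate path between them. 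The two forms need not a priori be compatible with a common complex structure, so the deformation of $A$ to diagonal form (or some equivalent device producing a common compatible $J$) is the crux on which the clean convex-combination argument, and thus the applicability of \cite{BK09}, ultimately rests.
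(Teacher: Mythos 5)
Your overall strategy coincides with the paper's (two applications of the Moser stability theorem \ref{thm:moser}, with convex combinations of l.c.K.\ forms supplying the paths), but there are two genuine gaps. The first is in your $\varphi$-step: you assert that $[\w_g]$ and $[\w_{g'}]=[\Psi^*\w_A]$ coincide because both are ``the'' integral class attached to the minimal covering. Integrality of the period lattice only says that each class maps $H_1(M,\Z)$ onto $\Z$; when $b_1(M)\geq 2$ there are many such classes, and nothing in the hypotheses identifies the two minimal exact coverings. Establishing $[\Psi^*\w_A]=[\w_g]$ is in fact the heart of the paper's proof: it recalls that the Ornea--Verbitsky embedding is induced by a holomorphic map $\tilde{\Psi}$ defined on the smallest exact covering $\tilde{M}$, equivariant with respect to an isomorphism $\Gamma\rightarrow\langle A\rangle$ of deck transformation groups; equivariance gives $\Psi^*\chi_A=\chi_{[\w_g]}$ at the level of multiplicative characters, and lemma \ref{lem:corresp} together with commutativity of the diagram (\ref{cd1}) converts this into the equality of Lee classes (and, by pulling back the automorphic potential equivariantly, into the automorphic-potential property of $\Psi^*\Phi_A$). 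Without this input your segment $(1-t)\Phi_g+t\Phi_{g'}$ is not a family with fixed Lee class, so theorem \ref{thm:moser} does not apply; with it, your step is exactly the paper's lemma \ref{lem:lckmoser}.

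The second gap is the one you flag yourself, on the Hopf side, and your proposed fix does not close it: after deforming $A$ to a diagonalizable (indeed diagonal) $A'$, the forms $\Phi_{A'}$ and $\phi_0^*\Phi_N$ (the latter being the Vaisman form of $H_{\mathrm{Id}/2}$) are compatible with \emph{different} complex structures, because the natural identifications between diagonal Hopf manifolds with different eigenvalues --- the annulus rescalings and the eigenvalue rotations --- are not holomorphic. Hence there is no ``resulting common complex structure'' in which to take a convex combination, and the paper warns explicitly, in the proof of proposition \ref{pro:linhopf}, that lemma \ref{lem:lckmoser} cannot be invoked at this point. What the paper does instead is staged: remark \ref{rem:family} handles non-diagonalizable $A$ by a genuine smooth path of l.c.K.\ structures with fixed Lee form and fixed automorphic potential; invariance of the Kamishima--Ornea potential under conjugation by $GL(N,\C)$ reduces to diagonal $A$; and explicit diffeomorphisms $K_t$ between fundamental annuli push the integral Vaisman structures along paths of eigenvalues (first equalizing the norms, then rotating the arguments until reaching $\mathrm{Id}/2$), producing paths of l.c.K.\ structures with smoothly varying potential $1$-forms to which theorem \ref{thm:moser} applies directly. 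These two points --- the character/equivariance argument on $M$ and the eigenvalue-path device on $H_A$ --- are precisely the substantive content of the paper's proof that your proposal leaves open.
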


Theorem \ref{thm:emblcs} provides a way of producing all exact l.c.s. structures with integral
period lattice on compact manifolds via pullback (or restriction). Very much as in symplectic
geometry, one can  give conditions so that a reduction process is possible for l.c.s. structures
 \cite{HK01}. Thus one may ask about the existence of universal models for l.c.s. structures via
 reduction. Our third main theorem gives a positive answer to this question for l.c.s. structures
  of the first kind on manifolds of finite type, and it is a natural generalization of results in \cite{GT89,MT97,LMT98}.

\begin{theorem}\label{thm:red} Let $(M,\Phi,\a)$ be a finite type manifold of dimension $2n$,
endowed with a l.c.s. structure of the first kind with rank $k$ period lattice $\Lambda$.
 Then, for any $N\geq 4n+k,$ the l.c.s. manifold $(M,\Phi,\a)$ is isomorphic to the l.c.s.
reduction of certain strongly reducible submanifold of \[(\R\times  \cJ^1(\mathbb{T}^k\times \R^N),
\Phi_{N,\Lambda},\alpha_{N,k},\w_{\Lambda}).\]
 The l.c.s. structure $\Phi_{N,\Lambda}$ is of the first kind with potential 1-form $\alpha_{N,k}$ the
 canonical 1-form in the first jet space of $\mathbb{T}^k\times \R^N$. Its Lee form $\w_{\Lambda}$ has period lattice $\Lambda$.
\end{theorem}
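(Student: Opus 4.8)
The plan is to realize $(M,\Phi,\alpha)$ as the l.c.s.\ reduction of a strongly reducible submanifold (the l.c.s.\ analogue of a coisotropic one) built by prolonging a suitable embedding of $M$ into the base of the jet bundle, following the scheme of \cite{GT89} and its contact and cosymplectic refinements \cite{MT97,LMT98}. I first fix the model. On $Q:=\mathbb{T}^k\times\R^N$ the first jet bundle $\cJ^1(Q)\to Q$ carries its canonical contact $1$-form $\alpha_{N,k}$ (the Liouville form $dz-\sum_i p_i\,dq^i$); on $\R\times\cJ^1(Q)$ one twists this contact datum by a closed $1$-form $\omega_\Lambda$ with period lattice $\Lambda$, pulled back from the $\mathbb{T}^k$ factor, to obtain $\Phi_{N,\Lambda}=d\alpha_{N,k}-\omega_\Lambda\wedge\alpha_{N,k}$. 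The first task is therefore to check that $(\R\times\cJ^1(Q),\Phi_{N,\Lambda},\alpha_{N,k},\omega_\Lambda)$ is genuinely l.c.s.\ of the first kind and to record its characteristic distribution, since the entire argument consists in matching these reduction data against those of $(M,\Phi,\alpha)$.

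Next I would produce the embedding in two stages. As $M$ is of finite type, a Whitney-type argument embeds the underlying manifold into $\R^N$ as soon as $N>4n$, the extra $+k$ in the hypothesis leaving room for the toroidal directions. Using the $k$ closed $1$-forms intrinsic to the first-kind structure whose joint periods generate $\Lambda$ (the Lee form $\omega$ being one of them), I would define a map $M\to\mathbb{T}^k=\R^k/\Lambda$ along which $\omega_\Lambda$ restricts to $\omega$; together with the Whitney embedding this gives an embedding $g\colon M\hookrightarrow Q$. The crucial step is the \emph{prolongation}: I would thicken $g$ to a submanifold $\tilde M\subset\R\times\cJ^1(Q)$ lying over $g(M)$ by prescribing the momentum coordinates $p_i$ and the $\R$-coordinate so that the canonical $1$-form descends, along the reduction, to the prescribed potential $\alpha$. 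Fixing these coordinates amounts to a fibrewise-linear interpolation problem that is solvable exactly because $N$ is large, and $\tilde M$ inherits the dimension needed to reduce back to $2n$.

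It then remains to verify \emph{strong reducibility} and to identify the quotient. By construction the kernel of the restricted form $i^{*}\Phi_{N,\Lambda}$ is the characteristic distribution tangent to the fibres of $\tilde M\to g(M)\cong M$; this distribution has constant rank and, the construction being fibrewise linear and $M$ of finite type, it is a simple foliation with Hausdorff leaf space. This is precisely the situation in which the l.c.s.\ reduction of \cite{HK01} applies, so $\tilde M$ is strongly reducible and its reduction carries a well-defined first-kind l.c.s.\ structure whose potential descends from $\alpha_{N,k}$ and whose Lee form descends from $\omega_\Lambda$. Since these were arranged to be $\alpha$ and $\omega$, and since a first-kind structure is determined by its potential and Lee form via $\Phi=d\alpha-\omega\wedge\alpha$, the reduction is isomorphic to $(M,\Phi,\alpha)$, with Lee form of period lattice $\Lambda$, as claimed.

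I expect the prolongation to be the main obstacle. One must choose the lift so as to satisfy simultaneously three competing constraints: (i) the canonical form must reduce to the given potential $\alpha$; (ii) the characteristic distribution of $\tilde M$ must have exactly the constant rank that reduces the dimension to $2n$; and (iii) the toroidal coordinates must be threaded so that $\omega_\Lambda$ restricts to $\omega$ with period lattice \emph{precisely} $\Lambda$ and not merely a sublattice. Reconciling these is what forces the bound $N\ge 4n+k$ and is the technical heart of the statement, the finite-type hypothesis being used throughout to globalize the local prolongation and to ensure that the leaf space of the reduction is a smooth manifold.
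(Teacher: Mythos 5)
Your proposal gets the target space right but the model structure wrong, and this matters: if $\omega_\Lambda$ is pulled back from the $\mathbb{T}^k$ factor alone, as you stipulate, then $\Phi_{N,\Lambda}=d\alpha_{k,N}-\omega_\Lambda\wedge\alpha_{k,N}$ is \emph{degenerate}, since neither summand involves $ds$ and hence $i_{\partial/\partial s}\Phi_{N,\Lambda}=0$. The Lee form of the paper's model is $\w_\mu=ds+\sum_{j=1}^k\mu_j\,d\theta_j$: the $ds$ term is exactly what makes the $2$-form nondegenerate and makes $B=\partial/\partial s$ a transverse infinitesimal automorphism with $\w_\mu(B)=1$, and in the paper it also absorbs the exact summand $\w_0$ of the decomposition $\w=\w_0+\sum_j\mu_j\w_j$ via the shift $(s,u,\xi,x)\mapsto(s-f_0(x),u,\xi,x)$, a correction your step ``choose $M\to\mathbb{T}^k$ so that $\omega_\Lambda$ restricts to $\omega$'' silently needs (it can be patched when $k\geq 1$ by absorbing $f_0$ into one of the circle maps, but not with your $\omega_\Lambda$, and not at all when the nondegeneracy already fails).

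The deeper gap is the prolongation, which you correctly flag as the heart but then replace by an unspecified ``fibrewise-linear interpolation.'' A thickened graph over $g(M)$ with \emph{prescribed} momenta cannot be strongly reducible in the sense of lemma \ref{lem:red}: that notion requires $B=\partial/\partial s$ and $E=-\partial/\partial u$ tangent to $C$ and a characteristic foliation $\ker\w_{\mid C}\cap\ker\a_{\mid C}\cap\ker d\a_{\mid C}$ of positive constant rank cutting the dimension back to $2n$, whereas a section-type submanifold has neither the tangency nor the foliation. The paper's construction is genuinely multi-stage, glued by the concatenation lemma \ref{lm:red0}: (i) first enlarge to $M_1=M\times T^*\mathbb{T}^k$ with the \emph{corrected} potential $\a_1=\a+\sum_j\mu_jr_j(d\theta_j-\w_j)$, reduced back to $M$ along the graph of $\tau=(\tau_1,\dots,\tau_k)$ --- the correction term restricts to zero on the graph (so $\a_1$ restricts to $\a$) and puts the $\partial/\partial r_j$ into the kernel; (ii) inside $\R\times\cJ^1M_1$ the submanifold is the image of $(s,u,x)\mapsto(s,-u,-\a_1(x))$, i.e.\ the graph of $-\a_1$ thickened by the \emph{commuting flows} of $B_1$ and $E_1$ --- it is this flow-graph that makes the canonical $1$-form reduce to $\a_1$ and supplies the rank-two foliation $(s,u,x)\mapsto\psi_u(\phi_s(x))$; (iii) the final submanifold $C_4=\pi^{-1}(i'(M_1))$ leaves the momenta over $i'(M_1)$ entirely \emph{free}, the conormal directions forming the foliation, rather than prescribing them. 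Finally, the bound $N\geq 4n+k$ is not forced by any interpolation-solvability: it is just Whitney for $M^{2n}\hookrightarrow\R^{4n}$ together with the extra $\R^k$ factor of $T^*\mathbb{T}^k$ carried by $M_1$.
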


We also prove an equivariant version of the previous theorem. 

\begin{theorem}\label{thm:redeq}
Let $G$ be a compact connected Lie group which acts on a finite type l.c.s. manifold $(M,\Phi,\alpha)$ 
of the first kind with rank $k$ period lattice $\Lambda$, via a l.c.s. action $\psi:G\times M\to M$ of the first kind.
 Then, for a sufficiently large integer $N,$ $(M,\Phi,\alpha,\psi)$ is isomorphic to the l.c.s. equivariant
 reduction by  a certain $G$-invariant strongly reducible submanifold of the l.c.s. structure of the
 first kind $(M_{k,N}={\Bbb R}\times {\mathcal J}^1({\Bbb T}^k\times {\Bbb R}^N), \Phi_{N,\Lambda},\alpha_{k,N},
\omega_\Lambda,\psi_{k,N})$, where  $\psi_{k,N}:G\times M_{k,N}\to M_{k,N}$ is a l.c.s. action of the first kind.  
\end{theorem}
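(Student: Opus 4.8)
The plan is to upgrade the non-equivariant construction of Theorem~\ref{thm:red} to the equivariant setting, using the compactness of $G$ at three separate points: to \emph{linearize} the action, to \emph{average} the potential $1$-form, and to \emph{lift} the action canonically so that it automatically preserves the model structure. The guiding principle is that the only genuinely new input is an equivariant replacement for the auxiliary Whitney-type embedding used in Theorem~\ref{thm:red}; everything downstream is then produced by the same formulas, now $G$-equivariant by construction.

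First I would recall that the realization of $(M,\Phi,\alpha)$ as a reduction in Theorem~\ref{thm:red} factors as an embedding of $M$ into the base $\mathbb{T}^k\times\R^N$, followed by a lift into the jet model $M_{k,N}=\R\times\cJ^1(\mathbb{T}^k\times\R^N)$ built from the potential $\alpha$ and the Lee form $\omega$, with $\Phi_{N,\Lambda}=d_{\omega_\Lambda}\alpha_{k,N}$ controlled by the canonical $1$-form $\alpha_{k,N}$. The $\mathbb{T}^k$-component of the base embedding is the developing map of $\omega$ relative to the lattice $\Lambda$, and the $\R^N$-component is auxiliary. Since the action $\psi$ is l.c.s. of the first kind, $G$ preserves $\Phi$ and $\omega$; as $G$ is connected it acts on $\mathbb{T}^k$ by translations (trivially when $G$ is nonabelian), so the developing map is automatically equivariant for this translation action. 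For the $\R^N$-component I would invoke the Mostow--Palais equivariant embedding theorem, which for the finite type manifold $M$ and the compact group $G$ yields, for $N$ sufficiently large, a linear orthogonal $G$-representation on $\R^N$ together with a $G$-equivariant embedding of $M$ into $\mathbb{T}^k\times\R^N$.

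Next I would arrange the potential to be invariant. Averaging over $G$ with Haar measure, set $\bar\alpha=\int_G g^*\alpha\,dg$; because $G$ preserves $\omega$ the operator $d_\omega$ is $G$-equivariant, and since $G$ preserves $\Phi$ one gets $d_\omega\bar\alpha=\int_G g^*(d_\omega\alpha)\,dg=\int_G g^*\Phi\,dg=\Phi$, so $\bar\alpha$ is again a potential for $\Phi$ and we may assume $\alpha$ itself is $G$-invariant. I would then define $\psi_{k,N}$ to be the canonical lift to $M_{k,N}$ of the (translation on $\mathbb{T}^k$, linear on $\R^N$, trivial on the two $\R$ factors) action on the base. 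The cotangent/jet lift tautologically preserves $\alpha_{k,N}$, hence preserves $\omega_\Lambda$ and $\Phi_{N,\Lambda}$, so $\psi_{k,N}$ is a l.c.s. action of the first kind. The embedding $M\hookrightarrow M_{k,N}$ assembled from the equivariant base map and the $G$-invariant potential $\alpha$ is then $G$-equivariant, its image is a $G$-invariant strongly reducible submanifold by the argument of Theorem~\ref{thm:red}, and the reduction computed there is now manifestly $G$-equivariant and recovers $(M,\Phi,\alpha,\psi)$.

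The main obstacle is to secure all three requirements simultaneously: $G$-equivariance, the first-kind embedding identities $\alpha_{k,N}\mapsto\alpha$, $\omega_\Lambda\mapsto\omega$, $\Phi_{N,\Lambda}\mapsto\Phi$, and $G$-invariance together with strong reducibility of the image. The two delicate checks are that the canonical lift of the \emph{linearized} $\R^N$-action is genuinely a first-kind l.c.s. action on $M_{k,N}$, and that replacing $\alpha$ by its average $\bar\alpha$ does not spoil the transversality and strong reducibility established non-equivariantly in Theorem~\ref{thm:red}. Both are controlled by the compactness of $G$ — the lift preserves the tautological data by naturality, and strong reducibility is an open condition stable under the $C^\infty$-small averaging correction — but carrying out these verifications is where the real work lies.
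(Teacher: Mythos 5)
Your high-level toolbox coincides with the paper's (Mostow--Palais for the linear model, Haar averaging, tautological cotangent/jet lifts), but two of your steps contain genuine gaps. First, your averaging is aimed at the wrong object. For a l.c.s.\ action of the first kind one has $\psi_g^*\alpha=\alpha$ for all $g$ \emph{by definition} (each $\psi_g$ is a strict morphism with $\psi_g^*\alpha=\alpha$), so your average $\bar\alpha$ equals $\alpha$ and that step is vacuous; what actually needs averaging is the decomposition $\w=\w_0+\sum_j\mu_j\w_j$ of the Lee form. Invariance of $\w$ does not make the individual summands $\w_j$ invariant, and without $\psi_g^*\w_j=\w_j$ your claim that the circle-valued maps $\tau_j$ (with $\tau_j^*d\theta_j=\w_j$) are ``automatically equivariant'' fails: $\tau_j\circ\psi_g$ and $\tau_j$ then differ only by a null-homotopic map to $S^1$, not by a constant rotation. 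The paper first replaces each $\w_j$ by its Haar average, which represents the same integral class (lemma \ref{Lemma-equiv}(1)), and only then obtains characters $\varphi_j\colon G\to S^1$ making $\tau_j$ equivariant (lemma \ref{Lemma-equiv}(2)); the same averaging must be applied to the primitive $f_0$ of $\w_0$ so that the simplifying diffeomorphism $H$ becomes equivariant --- a step entirely absent from your sketch. (Your parenthetical that the induced $\mathbb{T}^k$-action is trivial for nonabelian $G$ is also false --- $U(2)$ admits the determinant character; triviality requires semisimplicity --- though nothing essential hinges on it.)

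Second, and more seriously, you compress theorem \ref{thm:red} into ``equivariantly embed $M$ into $M_{k,N}$; the image is the strongly reducible submanifold.'' That structure is wrong: by lemma \ref{lem:red}(1) a strongly reducible submanifold must be tangent to $B=\partial/\partial s$ and $E=-\partial/\partial u$, and neither lies in the characteristic distribution (since $\w_\Lambda(B)=1$ and $\alpha_{k,N}(E)=-1$), so the submanifold necessarily contains the $(s,u)$-flow lines. In particular it can never be the image of a compact $M$ (e.g.\ $S^{2n-1}\times S^1$), and for a graph-type embedding determined by $\alpha$ and the developing map the tangency condition fails outright. The paper instead runs a three-stage reduction --- $C_1\subset M_1=M\times T^*(\mathbb{T}^k)$, then $C_2\subset M_2=\R\times\cJ^1M_1$ built by flowing out along the commuting flows of $B_1,E_1$, then (after the equivariant $H$) $C_4=\pi^{-1}(i'(M_1))$, a full preimage under the bundle projection rather than an embedded copy of $M$ --- concatenated by lemma \ref{lm:red0}; the equivariant work consists in checking $G$-invariance of each $C_i$ and of the forms at each stage, with the $G$-action on $M_{k,N}$ defined as the cotangent lift of the product of $\bar\psi_k$ on $\mathbb{T}^k$ and the Mostow--Palais linear action on $\R^N$. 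Your closing concern --- that averaging might spoil transversality, to be rescued by openness of strong reducibility --- is a red herring: no perturbation argument occurs anywhere in the proof; every identification is exact.
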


In looking at the problem of existence of universal l.c.s. manifolds linking
with the results in \cite{OV09,OV10}, one is naturally led to ask about the existence of
 (compact) universal models for compact manifolds endowed with an arbitrary 1-form. This is
 a problem that was addressed in much more generality in \cite{NR61}, where universal models
 for (principal) connections on principal bundles for  compact groups were constructed. For
 $U(1)$ the universal models are $S^{2N-1}\rightarrow \mathbb{C}\mathbb{P}^{N-1}$  with the
 standard contact 1-form $\eta_N$. Any 1-form on a manifold $M$ defines a connection on the
 trivial principal bundle $M\times U(1)$. By  \cite{NR61} one produces a bundle morphism
 $M^n\times U(1)\rightarrow S^{8n+3}$, which composed by the right with the inclusion
 $M\rightarrow M\times \{1\}$ pulls back the standard contact 1-form to the given 1-form.
 It turns out that if one is interested not in every compact group but just in $U(1)$, a
 slightly different proof allows to cut down substantially the dimension of the target sphere from $8n+3$ to $4n+3.$

 \begin{theorem}\label{t1}
 Let $M$ be a compact manifold of dimension $n$ and $\Theta$ be a $1$-form on $M$.
 Then for any $N\geq 2n+2,$ there exist an embedding $\Psi:M\to S^{2N-1}$ and a real number $c, c>0, $ such that
 $$\Psi^*(c\eta_{N})=\Theta.$$

In particular, if $\Theta$ is a contact 1-form one obtains an
strict contact embedding between the contact manifold $(M,\Theta)$
and $(S^{2N-1},c\eta_N).$
 \end{theorem}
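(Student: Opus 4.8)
The plan is to write the desired map in coordinates and reduce the statement to a positive-spanning problem for the differentials of finitely many functions on $M$. Realize $S^{2N-1}\subset\C^N$ with the standard contact form (up to a positive constant absorbed into $c$) as $\eta_N=\sum_{j=1}^N \mathrm{Im}(\bar z_j\,dz_j)$. For a map $\Psi=(z_1,\dots,z_N)$ with $z_j=g_j e^{i h_j}$, where $g_j\ge 0$ and the $h_j$ are real functions on $M$, one computes $\mathrm{Im}(\bar z_j\,dz_j)=g_j^2\,dh_j$ and $|z_j|^2=g_j^2$. Hence $\Psi$ lands in $S^{2N-1}$ and satisfies $\Psi^*(c\eta_N)=\Theta$ as soon as $\sum_j g_j^2=1$ and $\sum_j g_j^2\,dh_j=\tfrac1c\Theta$; that is, I must exhibit $\tfrac1c\Theta$ as a convex combination, with nonnegative smooth weights $\lambda_j=g_j^2$ summing to $1$, of the exact forms $dh_j$.

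The crux is that $\Theta$ is an arbitrary (in particular sign-changing and non-closed) $1$-form, so the weights $\lambda_j$ are forced to be nonnegative while $\Theta$ carries no positivity. I would resolve this, and simultaneously control the dimension, by the following device. Choose a Whitney embedding $e=(e_1,\dots,e_{2n+1})\colon M\to\R^{2n+1}$, rescaled so that $|e_i|<\pi$ on $M$, and set $e_0=-\sum_{i=1}^{2n+1}e_i$. The $2n+2$ covectors $de_0,\dots,de_{2n+1}$ span $T_x^*M$ at every point (because $e$ is an immersion) and satisfy the single relation $\sum_{i=0}^{2n+1}de_i=0$. From this relation one deduces that they \emph{positively} span $T_x^*M$: if $v=\sum_i b_i\,de_i$ then also $v=\sum_i(b_i+t)\,de_i$ for every $t$, and one may shift all coefficients simultaneously into the nonnegative range.

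With this in hand I would write $\tfrac1c\Theta=\sum_{i=0}^{2n+1}b_i\,de_i$ with smooth coefficients $b_i$ (obtained, say, from the Moore–Penrose pseudo-inverse of $[de_i]$ relative to an auxiliary metric), and then put $\lambda_i=b_i+t$ with the smooth common shift $t=\big(1-\sum_i b_i\big)/(2n+2)$. By construction $\sum_i\lambda_i=1$ and, using $\sum_i de_i=0$, also $\sum_i\lambda_i\,de_i=\tfrac1c\Theta$; since $M$ is compact, for $c$ large enough every $\lambda_i$ is strictly positive. Setting $z_i=\sqrt{\lambda_i}\,e^{i e_i}$ for $i=0,\dots,2n+1$ then defines $\Psi\colon M\to S^{4n+3}\subset\C^{2n+2}$ with $\Psi^*(c\eta_{2n+2})=\Theta$, which settles the case $N=2n+2$; larger $N$ follows by composing with the standard inclusion $\C^{2n+2}\hookrightarrow\C^N$, which restricts $\eta_N$ to $\eta_{2n+2}$.

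It remains to check that $\Psi$ is an embedding, which I expect to be the only other delicate point. Since all $\lambda_i>0$, each $z_i$ is nowhere zero and $\arg z_i=e_i$ (this is where the normalization $|e_i|<\pi$ is used); thus composing $\Psi$ with $z\mapsto(\arg z_0,\dots,\arg z_{2n+1})$ recovers the embedding $e$. This shows $\Psi$ is an injective immersion, hence an embedding by compactness of $M$. Finally, if $\Theta$ is a contact form, then $\Psi^*(c\eta_N)=\Theta$ exhibits $\Psi$ as a strict contact embedding of $(M,\Theta)$ into $(S^{2N-1},c\eta_N)$, which is the last assertion. The whole efficiency gain — dimension $2n+2$ rather than the $4n+3$ of a naive sign-doubling or the $8n+3$ produced by the universal-connection argument of \cite{NR61} — comes precisely from the positive-spanning relation $\sum_{i=0}^{2n+1}de_i=0$, which lets a single extra coordinate absorb all the sign changes of $\Theta$.
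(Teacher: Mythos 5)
Your argument is correct, but it takes a genuinely different route from the paper's. The paper exploits the tautological property of $\eta_N$ viewed as the restriction of $\frac12\sum_j(y_j\,dx_j-x_j\,dy_j)$: after a (hard) Whitney embedding into $\R^{2n}$ it writes $\Theta=\sum_{i\leq p}f_i\,dx_i$, plugs the pairs $(x_i,f_i)$ together with a slack coordinate into a sphere to realize $\Theta$ up to the exact term $d\varphi$ with $\varphi=\frac12\sum_i f_ix_i$, and then spends one extra complex coordinate $(\varphi,1)$ to absorb that exact term --- positivity never enters, and the constant $c=r_2^2$ is merely a radius normalization. You instead work in polar coordinates $z_j=\sqrt{\lambda_j}\,e^{ih_j}$, which reduces the statement to exhibiting $\Theta/c$ as a convex combination $\sum_j\lambda_j\,dh_j$, and you defeat the forced nonnegativity of the weights with the positively spanning family $de_0,\dots,de_{2n+1}$ coming from an (easy) Whitney embedding into $\R^{2n+1}$ together with the single relation $\sum_i de_i=0$, taking $c$ large to push all weights strictly positive; this is in effect the partition-of-unity classifying-map mechanism of \cite{NR61} specialized to $U(1)$, with the one linear relation replacing the sign-doubling that inflates the dimension there, and it explains why in your proof $c$ must be large rather than being a normalization. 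Both routes land on the same bound $N=2n+2$. Two cosmetic points should be fixed. First, with the paper's convention one computes $\mathrm{Im}(\bar z_j\,dz_j)=x_j\,dy_j-y_j\,dx_j$, which is $-2$ times the paper's summand $\frac12(y_j\,dx_j-x_j\,dy_j)$; since the theorem requires $c>0$, your phrase about a positive constant absorbed into $c$ hides a sign, and you should take $z_j=\sqrt{\lambda_j}\,e^{-ie_j}$ (equivalently replace $e$ by $-e$), after which $\Psi^*(2c\,\eta_N)=\Theta$ with $2c>0$. Second, $|e_0|$ can be as large as $(2n+1)\pi$, so $\arg z_0$ is neither well defined as a smooth function nor equal to $e_0$; either drop the $z_0$-component when recovering the embedding --- the components $\arg z_1,\dots,\arg z_{2n+1}$, which do lie in $(-\pi,\pi)$, already reproduce $e$ and hence give injectivity and immersivity --- or rescale the embedding so that $\sum_{i\geq 1}|e_i|<\pi$.
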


The paper is organized as follows. In section \ref{sec:1} we will show that a
 universal model (via embeddings) of a compact manifold endowed with a $1$-form  is
 the $(2N-1)$-sphere with its standard contact structure (up to the multiplication by a
 constant). In section \ref{sec:tw} we will recall some aspects of twisted de Rham complexes and
 their cohomology; this setting allows to introduce l.c.s. structures as a twisted version of symplectic structures.
 In section \ref{sec:lcs} will prove that for a compact exact l.c.s. manifold $M$ with  integral period lattice,
there exist a natural number $N$ and  an embedding which  pulls back the standard l.c.s. structure with integral
 period lattice in
$S^{2N-1}\times S^1$ to the l.c.s structure on $M$.
In the particular case of l.c.K. manifolds, we will relate our results with the ones proved recently by Ornea and Verbitsky.
 In section \ref{sec:4},  we will describe
 a
 universal model for reduction of a l.c.s. manifold  of the first kind (theorem \ref{thm:red}). An equivariant
 version of this last result is proved in section \ref{sec:5} (see theorem \ref{thm:redeq}).  The paper ends 
with our conclusions,  a description of future research directions and an appendix where we show the non-exactness of the Oeljeklaus-Toma l.c.K. structures. 

\section{Universal models for 1-forms}\label{sec:1}

In this section we show that the spheres with their standard contact structures are
compact universal models for compact manifolds endowed with 1-forms. An upper bound for the dimension  
of the corresponding model sphere in terms of the dimension of the given manifold is also obtained.

Given a manifold endowed with a 1-form $(M,\Theta)$, it is always possible to
induce the 1-form via an embedding in some Euclidean space endowed with a linear
1-form. Specifically, in $\R^{2n}=T^*\R^n$ with coordinates $x_1,y_1,\dots,x_n,y_n$,
we consider the Liouville 1-form
\[\lambda_n=\sum_{j=1}^n y_jdx_j.\]
A manifold $M$ can always be embedded as a closed submanifold of some
Euclidean space $\R^N$, and  $\Theta$ can be assumed to be the restriction
 of $\bar{\Theta}\in \Omega^1(\R^N)$. Using the universal property of the
 Liouville 1-form in the cotangent bundle, the restriction of $\bar{\Theta}
 \colon \R^N\rightarrow T^*\R^N$ is shown to provide an embedding with the desired property.

If our manifold is compact, we would like to have a similar result but with compact universal models
 as well. Work of Narasimhan and Ramanan \cite{NR61} shows that a solution is given by $(S^{2n-1},\eta_n)$,
 where the standard (contact) 1-form $\eta_n$ is the restriction to the sphere of the 1-form
\[\eta_n=\frac{1}{2}\sum_{j=1}^n(y_jdx_j-x_jdy_j).\] Their result
fits into the more general framework of existence of universal
connections
 for principal bundles for compact groups. More precisely, they give a common construction
  for all unitary groups which includes a bound in the dimension of the target sphere.
  If one is just interested in $U(1)$, it is possible to find an approach which allows
  to obtain target spheres of smaller dimension than in \cite{NR61}.

\begin{proof}[Proof of theorem \ref{t1}] Firstly, 
Whitney's Theorem grants the existence of an embedding $i\colon M\to \R^{2n}$.
We let $U$ be a neighborhood of  $i(M)$ such that its closure $\bar{U}$ is compact.
Denote by  $\bar\Theta$ an extension of $\Theta$ to $\bar{U}.$ Then,
\begin{equation}\label{desco}
\bar\Theta=\sum_{i=1}^{p} f_idx_i, \mbox{ with }p\leq 2n,
\end{equation}
 where $(x_1,\dots, x_{2n})$ are the restriction to $\bar{U}$ of the standard coordinates
  in $\R^{2n}$,  and $f_i\in C^\infty(\bar{U})$.

  Since $\bar{U}$ is compact, there exists $r_1>0$ such that
 $$\sum_{k=1}^{p} ((f_k(x))^2 + (x_k)^2)< r_1^2, \;\;\forall x=(x_1,\dots, x_{2n})\in U.$$

 Now, we consider the map  $\Psi_1:U\to {\mathbb R}^{2p +2}$ given by
 $$\Psi_1(x)=(x_1,f_1(x),\dots ,x_p, f_p(x),\sqrt{r_1^2-\sum_{k=1}^p((f_k(x))^2+ (x_k)^2)},0)$$
 which satisfies that  $\Psi_1(M)\subseteq S^{2p+1}(r_1)$ and
$\Psi_1^*(\eta_{p+1})=\bar\Theta -d\varphi,$ where $S^{2p+1}(r_1)$
is the sphere of dimension $2p+1$ and radius $r_1$, and $\varphi$
is the function on $U$ given by
 $$\varphi=\frac{1}{2}(\sum_{k=1}^pf_kx_k).$$

 Using again that $\bar{U}$ is compact, we deduce that there exists $r_2>0$ such that
 $$\gamma(x)=1 + (\varphi(x))^2 + \sum_{k=1}^p((f_k(x))^2 + (x_k)^2)< r_2^2, \mbox{ for all }x\in U.$$

 Then, the function $\Psi_2\colon U\to {\mathbb R}^{2p+4}$ defined by
$$\Psi_2(x)=(x_1,f_1(x),\dots,x_p, f_p(x), \sqrt{r_2^2-\gamma(x)},0,\varphi(x),{1})$$
induces an embedding  $\Psi_2:U\to S^{2p+3}(r_2)$ such that
$$\Psi_2^*(\eta_{p+2})=\bar\Theta.$$

Finally, if we consider the homothety $\Psi_3:S^{2p+3}(r_2)\to
S^{2p+3}=S^{2p+3}(1)$ given by
$$\Psi_3(x)=\frac{x}{r_2},$$
we have that $\Psi_3^*(\eta_{p+2})=\frac{1}{r_2^2}\eta_{p+2}.$
This ends the proof of our result.
\end{proof}

\begin{remark}\label{r2}
{\rm There is a clear analogy between the proof of theorem \ref{t1} and Tischler embedding theorem: as a first
 step one obtains a map into the sphere (resp. projective space) using basically the universal property of 
cotangent bundles (resp. that $\mathbb{C}\mathbb{P}^\infty$ is the Eilenberg-MacLane space $K(\Z,2)$). That
 map gives a solution up to an exact 1-form (resp. 2-form). Then one needs to use special properties of 
the standard 1-forms $\eta_N$ (resp. the Fubini-Study
2-forms) which makes a correction possible at the expense of increasing by two the dimension of the target.
}
\end{remark}

\begin{remark}{\rm  Our proof is similar to the lemma in  \cite{NR61} section 3, which allows
to obtain universal models for principal connections on trivial $U(n)$-bundles over (subsets of)
 Euclidean space. The difference is that what we make in two steps (firstly getting the result up
  to an exact form and then finding a suitable correction) in  \cite{NR61} is done in just one step
   and for all unitary groups. It is that what allows to cut down the dimension from $8n+3$ to $m,$ with $m\leq 4n+3.$}
\end{remark}

 
\section{Twisted de Rham complexes and local conformal closedness}\label{sec:tw}

In this section we recall a few facts about twisted de Rham differentials  and their cohomology, which will be useful for
our understanding of l.c.s. structures. 

Let $M$ be a manifold. The vector space of smooth functions acts on $\Omega^*(M)$ by $C^\infty(M)$-automorphisms
\[\Omega^*(M)\overset{\mathrm{e}^f}{\rightarrow} \Omega^*(M),\, f\in C^\infty(M).\]
This is not a chain map but it becomes so if we consider the complexes 
\begin{equation}\label{eq:inner}
 \Omega^*(M,d)\overset{\mathrm{e}^f}{\rightarrow}\Omega^*(M,d_{df}),\, f\in C^\infty(M),
\end{equation}
where we use the twisted de Rham differential
\begin{equation}\label{eq:exactdr}
d_{df}(\alpha):=d\alpha-df\wedge \alpha.
 \end{equation}
Any  1-form $\w$ can be used to twist the de Rham differential into $d_\w$ as in (\ref{eq:exactdr}) 
substituting $df$ by $\omega$. In this case $d_\w^2=0$ if and only
if $\w$ is closed. Generalizing (\ref{eq:inner}),  smooth functions act on twisted de Rham complexes
 \begin{equation}\label{eq:innerw}
 \Omega^*(M,d_\w)\overset{\mathrm{e}^f}{\rightarrow}\Omega^*(M,d_{\w+df}),\, f\in C^\infty(M),
\end{equation}
and the isotropy of any twisted de Rham complex is determined by  the constant functions.
We call the equivalence classes
 \emph{conformal classes of twisted de Rham complexes}; we speak of  \emph{homothety classes of twisted de Rham complexes}
if we just consider the action of constant functions. Clearly, twisted de Rham complexes are 
in bijection with closed 1-forms; the action of functions described above corresponds to the action given by adding the
differential of the function, and conformal classes of twisted de Rham complexes correspond to cohomology classes of 1-forms. 
In particular  the conformal class of de Rham complex  corresponds to exact 1-forms.

 If $\omega$ is closed, the cohomology of the complex $\Omega^*(M,d_\w)$ is the \emph{twisted de Rham cohomology} 
$H_{\w}^*(M)$ (also referred to
in the literature as \emph{Lichnerowicz cohomology} or \emph{Morse-Novikov cohomology}), and (\ref{eq:innerw}) induces isomorphisms  
of twisted cohomologies. The twisted de Rham cohomology of a conformal class of twisted de Rham complexes
 is the twisted de Rham cohomology
of any of its representatives.  The homotethy class of the twisted de Rham cohomology of a twisted complex is 
its twisted de Rham cohomology modulo automorphisms induced by the constants.

Let $\w$ be a closed 1-form in $M$. Then $\omega$ can be identified with the additive character
\[\w\colon H_1(M,\Z)\rightarrow  \R.\]
The image of $H_1(M,\Z)$ (or $\pi_1(M)$) by $\w$ is a lattice $\Lambda$ inside of $\R$. 
We define the \emph{period lattice and rank} of $(M,\w)$ to be $\Lambda$ and its rank, respectively.
 In particular a discrete period lattice
is the same as a rank 1 period lattice. We will say that $\w$ has \emph{integral period lattice} if $\Lambda=\Z\subset \R$.
These are invariants of the conformal classes of twisted de Rham complexes.

 Let $(M,\w)$ and $(M',\w')$ be manifolds endowed with closed 1-forms. A smooth map 
 $\phi\colon (M,\w)\rightarrow (M',\w')$ is a \emph{morphism} if it pullbacks the conformal class 
of $\Omega^*(M,d_{\w'})$ into the conformal class of $\Omega^*(M,d_{\w})$. The morphism is \emph{strict} 
if it maps one twisted complex into the other. 
Alternatively,  $\phi$ is a morphism if $[\phi^*\w']=[\w]\in H^1_{\mathrm{dR}}(M)$, and it is strict
if the equality occurs at the level of 1-forms. If $\phi^*\w'=\w+df$ we call $f$  a \emph{scaling function}
 (which is unique up to
constants).

 For a morphism $\phi\colon (M,\w)\rightarrow (M',\w')$ we have $\Lambda\subset \Lambda'$, and
 thus it is rank decreasing. A morphism
is called \emph{full} if $\Lambda= \Lambda'$.

Given a morphism  $\phi\colon (M,\w)\rightarrow (M',\w')$ and $f$ a scaling function, there is an induced homomorphism
\begin{eqnarray}\label{eq:twistedcoh}\nonumber
 \phi^*\colon H^*_{\w'}(M')&\longrightarrow & H^*_{\w}(M)\\
   \beta' &\longmapsto &\mathrm{e}^{-f}\phi^*\beta'.
\end{eqnarray}
To get rid of the choice of scaling function one has to pass to the homothety class of the twisted de Rham
complexes.

\subsection{Twisted de Rham cohomology and de Rham cohomology}

There are two natural ways in which twisted de Rham cohomology can be related to de Rham cohomology. They correspond 
to ways of neglecting the non-exactness of $\w$: working locally or going to a suitable covering space.  

\subsubsection{Local conformal closedness.} Recall that a form $\beta\in \Omega^k(M)$ is  said to be
\emph{ locally conformally closed } if for each $x\in M$, there exist an open neighborhood $U$
of $x$ and a function $\sigma:U\to \R$ such that $e^{-\sigma}\beta$ is closed. 
\begin{remark}\label{rem:isotropic}{\rm Depending 
on the local behaviour of $\beta$ there might be no uniqueness up to additive constant in the choice of $\sigma$. One way to attain
such uniqueness is to ask $\beta$ at each point not to have isotropic hyperplanes. }
\end{remark} 

Let $U_i$, $i\in I$, be an open cover so that $\w_{\mid U_i}$ is exact. Then 
the inclusion $(U_i,0)\hookrightarrow (M,\w)$ is a morphism. If $\beta$
is $d_\w$-closed, then by (\ref{eq:twistedcoh}) 
$\mathrm{e}^{-f_i}\beta$ is closed in $U_i$, where $f_i$ is a scaling function.
 In particular $\beta_{\mid U_i}$ is locally conformally closed.

Conversely let $\beta$ be a locally conformally closed form such that the local functions 
$\sigma_i\colon U_i\rightarrow \R$, $i\in I$,
are unique up to constant. Then the Cech cocycle
$\beta_{\mid U_i}\in H^*_{d\sigma_i}(M)$ glues into a cocycle $\beta\in H^*_{\w}(M)$, where $\w_{\mid U_i}=d\sigma_i$. 

\subsubsection{Covering spaces and automorphic forms.}
Let $\w\in \Omega^1(M)$ be closed.  A covering space $\pi\colon\tilde{M}\rightarrow (M,\w)$ is called exact if
  $\tilde{\w}:=\pi^*\w$ is exact. This is equivalent to saying that 
\[\pi\colon (\tilde{M},0)\rightarrow (M,\w)\] is a morphism.  Therefore according to (\ref{eq:twistedcoh}), 
 $\mathrm{e}^{-f}\pi^*\beta$ is closed whenever $\beta\in \Omega^k(M)$ is $d_\w$-closed, where $f$ is a scaling function.
The \emph{smallest exact covering space} of $(M,\w)$ is the one with fundamental group the kernel of the additive character $\w$.
 
 Consider the multiplicative character
\begin{eqnarray}\label{eq:character}\nonumber
 \chi \colon \Gamma &\longrightarrow &\R^{>0}\\
      \gamma &\longmapsto & \chi(\gamma),\,\gamma^*\mathrm{e}^f=\chi(\gamma)\mathrm{e}^f.
\end{eqnarray}
For every $\beta\in \Omega^*(M)$ the group of deck transformations $\Gamma$ acts on  $\mathrm{e}^{-f}\pi^*\beta$ by homotheties 
$\gamma\rightarrow \chi(\gamma)$. We denote the subcomplex of all forms with that property by $\Omega^*(\tilde{M})^{\chi}$,
and we refer to them as automorphic forms (w.r.t. $\chi$). Note that $\chi$ 
is related with  the additive character $\w$ in a straightforward manner: the additive character induces
 additive character on $\tilde{M}$ by pull back or equivalently by using 
\begin{equation}\label{eq:exact}
1\rightarrow \pi_1(\tilde{M})\rightarrow \pi_1(M)\rightarrow \Gamma\rightarrow 1,
\end{equation}
that we still denote by the same name, and one has
\begin{equation}\label{eq:rankchar}
 -\mathrm{ln}\chi=\w.
\end{equation}
In particular this allows to read the additive character by data in the exact covering space (see also \cite{PV}).

Conversely, we say that  $\tilde{\beta}\in \Omega^*(\tilde{M})$ is \emph{automorphic} if the group of deck transformations acts
by homotheties on $\tilde{\beta}$.  We denote by  $\chi_{\tilde{\beta}}$ the corresponding character.  Using (\ref{eq:exact}), 
the character induces a character in $M$, and taking minus its logarithm an additive one, that is an element in $H^1(M,\Z)$.
Let $\w_{\tilde{\beta}}\in \Omega^1(M)$ be a representative. Then 
\[\pi\colon (\tilde{M},0)\rightarrow (M,\w_{\tilde{\beta}})\]
is a morphism. Let $f$ be a scaling function for $\pi^*\w_{\tilde{\beta}}$. Then by
 (\ref{eq:rankchar}) $\mathrm{e}^{f}\tilde{\beta}$ is invariant
under the action of $\Gamma$, and thus descends to $\beta\in \Omega^k(M)$ which is $d_{\w_{\tilde{\beta}}}$-closed
 if $\tilde{\beta}$ is closed.

We summarize this correspondence in a lemma for its latter use (see also \cite{Ba02}).
\begin{lemma}\label{lem:corresp} Let $\pi\colon\tilde{M}\rightarrow (M,\w)$ be a exact covering space. Then
it determines a character $\chi$ such that 
 the  assignment
\begin{eqnarray}\label{eq:covisom}\nonumber
 \Omega^*(M,d_\w) &\longrightarrow &\Omega^*(\tilde{M},d)^{\chi}\\
   \beta &\longmapsto & \mathrm{e}^{-f}\tilde{\beta},
\end{eqnarray}
where $f$  is a scaling function, is a monomorphism of chain complexes sending forms into automorphic forms. To avoid 
the choice of scaling function one may speak of a monomorphism from the homothety class of  $\Omega^*(M,d_\w)$
to the homothety class of the subcomplex $\Omega^*(\tilde{M},d)^{\chi}$,  which descends  to homothety classes of
twisted de Rham cohomology.

\noindent Conversely, any character $\chi\colon \Gamma\rightarrow \R^{>0}$ determimes a cohomology class  of $H^1(M,\R)$, and for 
any representative $\w_\chi$ a chain map 
\begin{equation}\label{eq:covisom2}
  \Omega^*(\tilde{M},d)^{\chi} \longrightarrow \Omega^*(M,d_{\w_\chi}).
\end{equation}
To get rid of choices one speaks of a well defined map form the homothety class of  $\Omega^*(\tilde{M},d)^{\chi}$
 into the conformal
class  of  $\Omega^*(M,d_{\w_\chi})$. Clearly, both constructions are inverse of each other (when we consider conformal
classes of twisted de Rham complexes in $M$).
\end{lemma}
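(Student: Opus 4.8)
The plan is to establish the correspondence of Lemma \ref{lem:corresp} by unwinding the definitions set up in the preceding subsection and verifying the two claimed maps are mutually inverse chain morphisms. First I would treat the forward direction. Given the exact covering $\pi\colon\tilde{M}\to(M,\w)$, the form $\tilde{\w}=\pi^*\w$ is exact, so there is a scaling function $f$ with $df=\tilde{\w}$; this $f$ is unique up to an additive constant since $\tilde M$ is connected (or up to constants on each component). For any $d_\w$-closed $\beta\in\Omega^*(M)$, I would compute directly that $\mathrm{e}^{-f}\pi^*\beta$ is $d$-closed: indeed $d(\mathrm{e}^{-f}\pi^*\beta)=\mathrm{e}^{-f}(d\pi^*\beta-df\wedge\pi^*\beta)=\mathrm{e}^{-f}\pi^*(d\beta-\w\wedge\beta)=\mathrm{e}^{-f}\pi^*(d_\w\beta)$, which is the content already recorded via (\ref{eq:twistedcoh}). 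So the assignment $\beta\mapsto\mathrm{e}^{-f}\pi^*\beta$ is a chain map from $\Omega^*(M,d_\w)$ to $\Omega^*(\tilde M,d)$.

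Next I would check the image lands in the automorphic subcomplex $\Omega^*(\tilde M,d)^\chi$ and identify the character. For a deck transformation $\gamma\in\Gamma$, since $\gamma$ preserves $\tilde\w=\pi^*\w=df$, the function $\gamma^*f-f$ is locally constant, hence a constant $c_\gamma$ on each component; this defines the multiplicative character $\chi(\gamma)=\mathrm{e}^{-c_\gamma}$ of (\ref{eq:character}), and one computes $\gamma^*(\mathrm{e}^{-f}\pi^*\beta)=\mathrm{e}^{-\gamma^*f}\pi^*\beta=\chi(\gamma)\,\mathrm{e}^{-f}\pi^*\beta$ because $\pi\circ\gamma=\pi$. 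This shows the form is automorphic with character $\chi$, and the relation (\ref{eq:rankchar}), $-\ln\chi=\w$, is exactly the translation of $df=\pi^*\w$ through (\ref{eq:exact}). Injectivity is immediate: if $\mathrm{e}^{-f}\pi^*\beta=0$ then $\pi^*\beta=0$, and since $\pi$ is a surjective submersion $\beta=0$. The only ambiguity is the choice of the additive constant in $f$, which rescales the whole image by a constant; this is precisely why the clean statement is phrased at the level of homothety classes, so I would remark that passing to homothety classes removes this choice and the induced map on twisted cohomology is well defined.

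For the converse, starting from a character $\chi\colon\Gamma\to\R^{>0}$, I would use (\ref{eq:exact}) to pull $\chi$ back to a character of $\pi_1(M)$, take $-\ln\chi$ to obtain a class in $H^1(M,\R)$ (integral if $\chi$ takes values in a discrete subgroup), and pick any representative $\w_\chi$. Given an automorphic $\tilde\beta$ with character $\chi$ and a scaling function $f$ for $\pi^*\w_\chi$, the computation $\gamma^*(\mathrm{e}^{f}\tilde\beta)=\mathrm{e}^{\gamma^*f}\chi(\gamma)\tilde\beta=\mathrm{e}^{f}\tilde\beta$ — using $\gamma^*f-f=-\ln\chi(\gamma)$ from (\ref{eq:rankchar}) — shows $\mathrm{e}^{f}\tilde\beta$ is $\Gamma$-invariant, hence descends to a form $\beta$ on $M$, and the same exponential identity as above shows $\beta$ is $d_{\w_\chi}$-closed when $\tilde\beta$ is closed; this is the chain map (\ref{eq:covisom2}). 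Finally I would verify the two constructions are inverse by tracking how the scaling functions compose: the main bookkeeping obstacle, and the only genuinely delicate point, is keeping the additive constants in the various scaling functions consistent so that the round trip is the identity \emph{on the nose} rather than merely up to a homothety. Handling this cleanly is exactly what forces the statement to be phrased in terms of homothety classes (and conformal classes on $M$), where the composite is unambiguously the identity; the underlying chain-level computations are the routine exponential manipulations above.
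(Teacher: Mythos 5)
Your proposal is correct and follows essentially the same route as the paper: the lemma there carries no separate proof, being stated as a summary of the immediately preceding discussion of exact covering spaces and automorphic forms, and your verifications (the exponential identity $d(\mathrm{e}^{-f}\pi^*\beta)=\mathrm{e}^{-f}\pi^*(d_\w\beta)$, the identification of $\chi$ from $\gamma^*f-f$ consistently with $-\ln\chi=\w$, the descent of $\mathrm{e}^{f}\tilde{\beta}$ in the converse, and the passage to homothety/conformal classes to absorb the additive constants) reproduce exactly the computations the paper records via (\ref{eq:twistedcoh}), (\ref{eq:character}) and (\ref{eq:rankchar}). Your explicit remarks on injectivity and on why the round trip is only canonical at the level of homothety and conformal classes make precise what the paper leaves implicit, but introduce no new idea.
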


\subsection{Computations of twisted de Rham cohomology.}  As for computations of twisted de Rham cohomology 
(for $\w$ non-exact), these are hard. If $M$ is connected $H^0_\w(M)=0$, and if additionally  $M$ is compact 
and orientable then $H^{\mathrm{top}}_\w(M)=0$ \cite{Ba02,GL,H}. Under the compactness and orientability assumptions, 
because the twisted differential is a degree zero deformation of the de Rham differential,
 the Euler characteristic of the twisted de Rham complex is the Euler characteristic of $M$. If one further assumes that $\w$
 is parallel
for some Riemannian metric (and non-trivial), then the twisted de Rham complex is acyclic \cite{LeLoMaPa}. 
There are some explicit 
computations by Banyaga describing non-trivial twisted cohomology classes in a particular 4-manifold \cite{BK09,Ba08}.

Our contribution to computations of twisted de Rham cohomology will be showing that the degree 2  (conformal)
twisted de Rham cohomology class associated to the so called 
Oeljeklaus-Toma l.c.K manifolds is non-trivial. We postpone the proof to the appendix \ref{appendix} 
(proposition \ref{pro:otnonexact}), once the necessary material on
l.c.K. structures has been introduced.

\begin{remark}\label{rem:nosurg} 
{\rm It is natural to extend to the twisted setting geometries defined by conditions on forms and their exterior
 differentials. Thus,
in order to get new examples of such structures  one would like to have simple topological constructions 
to produce new twisted cohomology classes
 in a fixed degree. Unfortunately, these seem difficult to come up with (for example,  given $(M,\w)$ and $(M',\w')$, 
it is natural to consider $(M\times M',\w+\w')$; it is true that if $d_\w\beta=
d_{\w'}\beta'=0$, then $d_{\w+\w'}(\beta\wedge \beta')=0$, but the degree is increased).}
\end{remark}


\section{Locally conformal symplectic structures}\label{sec:lcs}

Recall that an almost symplectic manifold $(M,\Phi)$ is said to be
l.c.s. if $\Phi$ is locally conformal closed (\cite{GL,Va85}).  If we are in dimension greater than 2,
 an assumption which we make from now on, isotropic subspaces cannot have codimension 1, 
so a l.c.s. manifold is given
by a closed 1-form $\w$, the Lee form, and a maximally non-degenerate $d_\w$-closed 2-form $\Phi$. In other words,
a l.c.s. form should be understood as a symplectic form in an appropriate twisted de Rham complex. The cohomology class of the
Lee form is the {\it Lee class} of $(M,\Phi)$. The rank and period lattice of the l.c.s. structure are
 the rank and period lattice of its
Lee class.  Several of our results are stated for l.c.s. structures with integral period 
lattice, but they remain valid for discrete lattices.

 Non-degeneracy is clearly a conformal property, and thus it is natural to consider conformal classes of  l.c.s. structures. 
In this respect, it is worth pointing out that in l.c.s. geometry one is often able to get results at the level of conformal classes.
 A good illustration of this fact is the Moser stability result in \cite{BK09} (see theorem \ref{thm:moser}), and the
 reduction by group actions in \cite{HK01}. Of course, it is much desirable to prove statements at the level of homothety 
classes or even of l.c.s.
 forms when possible.

A l.c.s. manifold $(M,\Phi)$ is called \emph{exact} if $\Phi$ is $d_\w$-exact, where $\w$ is the Lee form of $(M,\Phi)$.
 We will use the notation $(\Phi,\a)$ for an exact l.c.s. structure $\Phi$ with fixed potential 1-form $\a$.
 Of course, the information given by either of the tuples $(\Phi,\a), (\Phi,\a,\w)$ is the same, so we often omit the Lee
form.

Very much as in symplectic geometry a l.c.s. form $\Phi$ induces a vector bundle isomorphism $\flat_\Phi:TM\to T^*M,$ given by
\begin{equation}\label{flat-Phi}
\flat_\Phi(v)(x)=i_v\Phi(x)\mbox{ for $x\in M$ and $v\in T_xM.$}
\end{equation}

Now, we consider the Lie algebra  of infinitesimal automorphisms of $(M,\Phi)$, i.e.,
$${\mathfrak X}_\Phi(M)=\{X\in {\mathfrak X}(M)/{\mathcal L}_X\Phi=0\}.$$

Since $\Phi$ is non-degenerate,  we deduce that for all  
 $X\in{\mathfrak X}_\Phi(M)$, ${\mathcal L}_X\omega=0$, i.e., $\omega(X)=constant$. Moreover,
 from $d\omega=0$, we obtain that $\omega([X,Y])=0,$ for all $X,Y\in {\mathfrak X}_\Phi(M).$
Thus, we have  the Lie algebra morphism
$$l:{\mathfrak X}_\Phi(M)\to \R,\;\;\; l(X)=\omega(X)$$
where on $\R$ one takes the commutative Lie algebra structure. In particular the {\it anti Lee vector field } 
\begin{equation}\label{eq:aLee}
E:=-\flat_\Phi^{-1}(\w)
\end{equation}
 is in the kernel of $l$.

If $l\not= 0$ then  we say that $(M,\Phi)$ is a {\it l.c.s. manifold of the first kind}. A choice of 
{\it transverse infinitesimal automorphism (t.i.a.)} $B\in l^{-1}(1)$ produces a 1-form via the formula
\begin{equation}\label{eq:firstk}
\a=-\flat_\Phi(B),
\end{equation} and it can be checked that
\[d_\w\a=\Phi.\]
Therefore l.c.s. structures of the first kind are in particular
exact. We note that being of the first kind -unlike exactness- is
a property of the homotethy class of the l.c.s. structure, but not of the conformal class in general
\cite{Va85} (so in particular being exact is weaker than being of
the first kind). We also remark that $i_{[B,E]}\Phi=0$, which
implies that $[B,E]=0.$

\begin{remark}\label{rem:firstkl} 
{\rm Another way of arriving at the l.c.s. structures of the first kind among exact ones is as follows:
Consider  $(M,\Phi)$ an exact l.c.s. structure and select $\a$ a potential 1-form. In analogy
with symplectic geometry one defines a vector field by  $\a=-\flat_\Phi(B)$ and expects
an special behaviour of $\Phi$ under the flow if $B$. But one gets
\[\mathcal{L}_X\Phi=(1-\w(B))\Phi,\]
and obtains  either a Liouville type condition or a symplectic type condition by imposing 
 $\w(B)=0$ or $\w(B)=1$ (actually $\w(B)\neq 0$,
 but $B$ is rescalled to give 1). We are mainly interested in compact l.c.s. structures, so the Liouville type condition
is impossible since $\Phi^n$ is a volume form. Thus, the symplectic type condition, 
which coincides with being a l.c.s. structure of the first
kind, appears as the relevant subclass of exact l.c.s. structures on compact manifolds.}
\end{remark}

\begin{remark}\label{rem:contact}{\rm L.c.s. structures of the first kind in $M^{2n}$ are  discussed under
the name of contact pairs of type $(n-1,0)$ in \cite{BK10}.}
\end{remark}

An example of l.c.s. manifold of the first kind is $(S^{2n-1}\times
S^1,\Phi_n,\eta_n,d\theta)$,  where the canonical integral 1-form
on the circle $d\theta$ is the Lee form, and the contact $1$-form
$\eta_{n}$ on $S^{2n-1}$  is the potential $1$-form (so
$\Phi_n:=d_{d\theta}\eta_n$). 

The statement of theorem \ref{thm:emblcs} -whose prove we are ready to give-
 is that $(S^{2n-1}\times S^1,\Phi_n,\eta_n,d\theta)$
 are universal manifolds for exact l.c.s. structures with integral period lattice on compact manifolds.

\begin{proof}[Proof of theorem \ref{thm:emblcs}]
 By hypothesis we can write
 $$\Phi=d_\omega\alpha.$$
 Using theorem \ref{t1}, we deduce that for any natural $N\geq 4n+2$ there exist  an embedding  $\Psi_1:M\to S^{2N-1}$
  and a real number
 $c, c>0,$ such that
\begin{equation}\label{eq1}
\Psi_1^*(c\eta_{N})=\alpha.
\end{equation}

From the integrality assumption on the Lee form we conclude the
existence of a smooth map $\tau:M\to S^1$ such that
\begin{equation}\label{eq2}
\tau^*(d\theta)=\omega.
\end{equation}

Now,  the embedding $\Psi: M\to S^{2N-1}\times S^1$ given by
$$\Psi(x)=(\Psi_1(x), \tau(x))$$
satisfies (\ref{embedlcs}),   and this proves theorem \ref{thm:emblcs}.

 Note that  $\Psi\colon (M,\w)\rightarrow (S^1\times S^{2N-1},d\theta)$ is a strict morphism which must be full 
since $\w$ has integral period lattice and 
morphisms are rank decreasing. And by construction the homothety class of $\eta_N$ is pulled back to $\a$.
\end{proof}

\begin{remark}\label{rem:rank1}{\rm  Theorem \ref{thm:emblcs} remains true when the periods of the Lee
form generate the discrete lattice $q\Z$, $q\in \R^{>0}$. One needs to use instead the l.c.s. structures
 of the first kind  $(S^{2N-1}\times S^1,\Phi_{N,q},q\eta_N,d\theta),$ where $\Phi_{N,q}=q\Phi_N.$}
\end{remark}

\begin{remark}\label{rem:firstk}{\rm If $(M,\Phi,\a)$ is a l.c.s. structure of the first kind
with t.i.a. $B$ related to $\a$ as in (\ref{eq:firstk}), then the
Lee form is no-where vanishing. Therefore it defines a foliation
without holonomy, the discreteness of the period lattice being
equivalent to the foliation being a fibration over $S^1$. The
restriction of $\a$ to each leaf is a contact 1-form. So the
l.c.s. structure of the first kind can be understood as a
1-parameter family of exact contact manifolds with a transverse
automorphism (the integration of the t.i.a). With this description
theorem \ref{thm:emblcs} for l.c.s. structures of the first kind
is the appropriate 1-parameter version of theorem \ref{t1} for
contact forms.}
\end{remark}

\begin{proof}[Proof of corollary \ref{cor}]
Under the hypotheses of the corollary, we have that 
$$H_{\omega}^k(M)=\{0\},\,\;\; \mbox{ for all }k$$
(see theorem 4.5 in \cite{LeLoMaPa}). 
Thus, the l.c.s. structure of $M$ is exact and we may apply theorem \ref{thm:emblcs}. 
\end{proof}

\begin{remark}\label{rem:norank} 
{\rm It is natural to define  universal models for  l.c.s. on compact manifold by requiring the existence of embeddings
 into them which are (1)
full (strict) morphisms, and (2) pull back the (strict) conformal class of the l.c.s. form into the given one.
 This would imply the existence of a moduli parametrized by lattices $\Lambda\subset \R$.
As a consequence one would need to have a large supply of compact l.c.s. manifolds with arbitrary period lattice,
 but examples are scarce.
In this respect it is noteworthy the family of Oeljeklaus-Toma l.c.K. structures which have arbitrary 
rank \cite{PV}, and are non-exact as will be shown in appendix \ref{appendix} 
(note that our universal models for reduction are exact l.c.s. manifolds with  arbitrary period lattices,
 but they are non-compact).
As an illustration
of the difficulty of producing examples of l.c.s. structures  consider a contact
 manifold $(N,\eta)$ and the associated exact l.c.s. manifold 
 $(S^1\times N,d_{d\theta}\eta,d\theta)$.
Now let $\Sigma$ be an orientable surface. For the product manifold $S^1\times N\times \Sigma$ finding a 1-form $\alpha$ 
such that $d_{d\theta}\a$ is l.c.s. implies finding contact structures in $\Sigma\times N$. This is a very non-trivial problem
whose solution was only found recently \cite{Bo}. It is natural then to ask wheter given a l.c.s. manifold $(M,\Phi)$, one
can endow $\Sigma\times M$ with a l.c.s. structure (in particular one would be overcoming the problems noted in remark
\ref{rem:nosurg} about producing new closed 2-forms in twisted de Rham complexes out of old ones).}
\end{remark}

\section{Relation with embedding results for l.c.K.
manifolds}\label{sec:lck}
In this section we will discuss the relation between theorem
\ref{thm:emblcs} and recent embedding results by Ornea and
Verbitsky for locally conformal K\"ahler (l.c.K.) structures.

\subsection{Vaisman manifolds}

 A l.c.K. structure on a complex manifold $(M,J)$ is
given by a Hermitian metric $g$ which is locally conformal to a
K\"ahler one. The underlying l.c.s. structure is defined by the
associated 2-form
\[\Phi_g:=g(\cdot,J\cdot).\]
Equivalently, a l.c.K. structure is given by a l.c.s. form $\Phi$ and an integrable compatible almost complex structure $J$.
 If $(\tilde{M},\tilde{J})\rightarrow (M,J)$ is a complex covering space, according to 
lemma \ref{lem:corresp} there is a one to one correspondence between 
 homothety classes of automorphic K\"ahler forms $\Omega$  on $(\tilde{M},\tilde{J})$, and conformal classes of l.c.K.
 structures in $(M,J)$ whose Lee class becomes exact in $\tilde{M}$.

Let $(M,J,\Phi_g)$ be a l.c.K. manifold with Lee form $\w$. The
{\it Lee vector field} $B$ is the metric dual of the Lee form.  By
construction $-JB$ is the anti-Lee vector field  of the underlying l.c.s. as defined  in (\ref{eq:aLee}). A l.c.K.
structure is called {\it Vaisman} if the Lee form is parallel. It
follows that $B,E=-JB$ are both Killing,  preserve $J$, and have
commuting flows ($B-iJE$ is a holomorphic vector field)
\cite{KO05}. If we go to an exact complex covering space $(\tilde{M},\tilde{J})$, and we let $\Omega$ be in the homotethy
class of K\"ahler forms furnished by lemma \ref{lem:corresp}, the lift of the flow of the Lee vector field
is by K\"ahler homotheties. Unlike the case of l.c.K. structures the data $(\tilde{M},\tilde{J},\Omega)$ determines uniquely
the homotethy class of the Vaisman structure $(M,J,\Phi_g)$. Thus, one can take  this second approach
as a definition of Vaisman structure.

By definition the Lee vector field of a Vaisman structure belongs
to $\mathfrak{X}_{\Phi_g}(M)$. Therefore Vaisman structures are
the natural analogs in l.c.K. geometry of l.c.s. structures of the
first kind (see also \cite{BK10}, where compatible almost complex structures are also brought into the picture).
 It should be noted though, that Vaisman structures on
a compact manifold have always rank 1 \cite{OV03}. For simplicity
we will normalize our Vaisman structures so that $l(B)=1$, i.e. we
can take the Lee vector field as t.i.a., and thus
$-\flat_{\Phi_g}^{-1}(B)$ is a potential 1-form for the l.c.s.
structure $\Phi_g$ (and in this way we fix a representative of the homothety class).

Examples of Vaisman manifolds are the diagonal Hopf manifolds $(H_A,J_A,\Phi_A)$ \cite{KO05}.
One rather introduces these Vaisman structures by starting with the covering space
$(\tilde{H}_A,\tilde{J}_A):=(\C^{N}\backslash \{0\},J_{\mathrm{std}})$, and taking
$\Gamma\cong \mathbb{Z}$ generated by the linear action of an invertible matrix $A$ which has
 all its eigenvalues with norm $<1$, and which is diagonalizable. Because of the conditions on its
 eigenvalues $A$ is in the image of the exponential map and has a unique logarithm. That defines a
 1-parameter group of holomorphic transformations whose time 1 map is the action by $A$. In \cite{KO05}, section 3,
 a family $\Omega_{A,q}$, $q\in \R^{>0}$, of K\"ahler forms for which the previous flow acts by K\"ahler homotheties
 is given. The unique normalized Vaisman structure in $(H_A,J_A)$ is denoted by $\Phi_{A,q}$.
 The parameter $q$ is such that the period lattice of the Lee form is $q\Z$.
 Diagonal Hopf manifolds are
 diffeomorphic to $S^{2N-1}\times S^1$ (see the discussion in the proof of proposition \ref{pro:linhopf}).
  The standard l.c.s. structure in theorem \ref{thm:emblcs} is  the one associated to a diagonal Hopf manifold
    where $A=\lambda \mathrm{Id}$ is a real multiple of the identity (for an obvious
 diffeomorphism between $H_{\lambda \mathrm{Id}}$ and $S^{2N-1}\times S^1$).

\subsection{L.c.K structures with automorphic potential}
Theorem \ref{thm:emblcs} holds not just for l.c.s. structures of the first kind with integral period lattice, 
but for exact ones. In \cite{OV09} (see also \cite{OV10})  Ornea and Verbitsky have introduced the notion of l.c.K.
 structure with vanishing Bott-Chern class (or with automorphic potential):  given any closed 1-form $\w$,
 in the presence of a complex
structure the complexified twisted de Rham complex $(\Omega^*(M,\C),d_\w)$  can be split into its holomorphic
 and antiholomorphic components, and so the twisted differential
\[d_\w=\partial_\w+\bar{\partial}_\w.\]
This gives rise to a Bott-Chern cochain complex with cohomology groups $H^{p,q}_{\partial_\w\bar{\partial}_\w}(M)$, 
 (see \cite{OV09} for details); the action of functions in (\ref{eq:innerw}) on twisted de Rham complexes induces
an action on Bott-Chern cochain complexes. Suffice it to say here that for a l.c.K manifold $(M,J,\Phi_g)$ 
 the l.c.K. 2-form defines  the {\it Bott-Chern class}
\[[\Phi_g]\in H^{1,1}_{\partial_\w\bar{\partial}_\w}(M),\]
and that the identity induces a homomorphism
\[H^{1,1}_{\partial_\w\bar{\partial}_\w}(M)\rightarrow H_\w^2(M)\]
sending the Bott-Chern class to the class $[\Phi_g]\in H_\w^2(M)$
(one has the usual equality $i\partial_\w\bar{\partial}_\w=d_\w
d_\w^c$,
 with $d_\w^c=J^*d_\w$).

A l.c.K. structure is said to have {\it vanishing Bott-Chern class}
 if $[\Phi_g]\in H^{1,1}_{\partial_\w\bar{\partial}_\w}(M)$ is trivial. Having vanishing Bott-Chern class
is conformally invariant, and it also implies that the class $[\Phi_g]\in H_\w^2(M)$  is trivial, so the underlying l.c.s.
 structure is exact. It is not clear whether the converse is true or not, this being related
 to the existence of a global  $\partial_\w\bar{\partial}_\w$ lemma \cite{OV09}.

A Vaisman structure has vanishing Bott-Chern class, and a canonical $d_\w d^c_\w$-potential is
given by the constant function 1. Indeed, the equation
\[\partial_\w\bar{\partial}_\w 1=\Phi_g\]
is equivalent to
\[d(J^*\w)=\Phi_g+\w\wedge J^*\w,\]
which in turn is equivalent to
\[\mathcal{L}_B\Phi_g=i_Bd\Phi_g+di_B\Phi_g=0.\]

For a exact complex covering space $(\tilde{M},\tilde{J})$ of $(M,J,\w)$, in the assignment described 
in lemma \ref{lem:corresp}   $d_\w d^c_\w$-potentials for
$\Phi_g$ correspond to automorphic usual $dd^c$-potentials for the automorphic K\"ahler form associated to
 the choice of scaling function.
If $M$ is compact and $\tilde{M}$ is the smallest exact covering space then the $dd^c$-potential is proper \cite{OV10b}.

One advantage of l.c.K. structures with automorphic potential is
that one may construct new ones via small perturbations. For
example going to a exact covering space $\tilde{M}$, one can perturb a bit the complex structure in the base,
lift it, and perturb the initial potential a little bit so that
the K\"ahler condition still holds (and one can also allow for
perturbations of the subgroup $\Gamma$ so that the Lee class can
change).

Examples of l.c.K. structures with automorphic potential are
constructed in the linear Hopf manifolds $(H_A,J_A)$. This is the
same construction as for diagonal Hopf structures, but the
invertible matrix $A$ is just supposed to have eigenvalues of norm
$<1$. The automorphic  potential is constructed by perturbation as
indicated for example in  \cite{OV10a}
 (see also \cite{GO98}): the closure of the orbit of any $A$ as above (by the action by conjugation of the complex general
 linear group) contains diagonalizable matrices $A'$. This implies the existence of a diffeomorphism $H_A\cong H_{A'}$,
 pushing $J_A$ in to a complex structure in $H_{A'}$ that we still denote $J_{A}$. One can assume
 that $J_A$ and $J_{A'}$ are as close as desired.
 Thus the same automorphic potential in the covering space $(\tilde{H}_{A'}\tilde{J}_{A'})$ for 
the Vaisman structure $\Phi_{A'}$ defines an automorphic K\"ahler metric for the
 lift of the complex structure $J_A$. This gives rise to a l.c.K. form with automorphic potential $\Phi_A$ in $(H_{A'},J_A)$ 
(and hence in $(H_{A},J_A)$) by using the fixed
Lee form in $H_{A'}$  and the fixed scaling function in $\tilde{H}_{A'}$.

\begin{remark}\label{rem:family}{\rm
Observe that  for the given diffeomorphism the Lee form is the same for both $\Phi_A$ and $\Phi_{A'}$; also 
the automorphic potential in the covering space is chosen to be the same.
 Note as well  that one can arrange for the existence of $A_t$, $t\in [0,1]$, $A_0=A'$, $A_1=A$ and so
 that the construction holds with parameters (i.e. on has $(H_{A'},J_{A_t},\Phi_{A_t})$, $t\in [0,1]$
 l.c.K. structures with the same automorphic potential (in the covering space) and the same Lee form).}
\end{remark}

The main embedding result of Ornea and Verbitsky is the following:

\begin{theorem}\cite{OV09}\label{thm:holemb} Let $(M,J,\Phi_g,r)$ be a l.c.K. structure with automorphic potential
on a compact manifold of complex dimension at least 3. Then there
exists a holomorphic embedding of $(M,J)$ into a linear Hopf
manifold $(H_A,J_A)$. Moreover, if $(M,J,\Phi_g)$ is Vaisman then
there exist a holomorphic  embedding into a diagonal Hopf
manifold.
\end{theorem}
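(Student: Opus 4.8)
The plan is to reduce the embedding of the compact l.c.K.\ manifold to an \emph{equivariant} embedding of its K\"ahler cover into $\C^N\setminus\{0\}$, and then to descend to a linear Hopf manifold. First I would pass to the smallest exact covering space $\pi\colon \tilde M\to (M,\w)$ furnished by (\ref{eq:exact}). By Lemma~\ref{lem:corresp} the conformal l.c.K.\ structure lifts to an automorphic K\"ahler form $\Omega$, and the automorphic potential lifts to a genuine global $dd^c$-potential $\varphi$ with $dd^c\varphi=\Omega$; since $M$ is compact and $\tilde M$ is the smallest exact cover, $\varphi$ is a proper, positive, strictly plurisubharmonic exhaustion, as recalled in the text. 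The deck group $\Gamma\cong\Z=\langle\gamma\rangle$ acts holomorphically, and by (\ref{eq:rankchar}) it rescales the potential, $\gamma^*\varphi=\chi(\gamma)\varphi$; replacing $\gamma$ by its inverse if necessary we may take $0<\chi(\gamma)<1$, so that $\gamma$ is a holomorphic contraction.

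Next I would build the \emph{algebraic cone}: adjoin an apex $o$ to $\tilde M$, declaring $\{\varphi<\varepsilon\}\cup\{o\}$ a neighbourhood basis of $o$ and extending $\varphi$ by $\varphi(o)=0$. The contraction $\gamma$ extends so as to fix $o$, and $\varphi$ becomes a strictly plurisubharmonic exhaustion of $C:=\tilde M\cup\{o\}$. The analytic heart of the argument is to show that $C$ is a normal Stein space with an isolated singular (or smooth) point at $o$. This is exactly where the hypothesis $\dim_\C M\geq 3$ enters: the level set $\{\varphi=1\}$ is a compact strictly pseudoconvex CR manifold of real dimension $2\dim_\C M-1\geq 5$, hence CR-embeddable by Boutet de Monvel's theorem, and its Stein filling supplies the germ of $C$ at $o$. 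In complex dimension $2$ the analogous filling can fail (Rossi's non-embeddable CR three-manifolds), which is precisely why the statement is restricted to $\dim_\C M\geq 3$. I expect this Steinness step to be the main obstacle.

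Granting that $C$ is Stein, I would invoke Remmert's proper embedding theorem to realize $C$ as a closed analytic subvariety of some $\C^N$. The grading of $\cO(C)$ by $\varphi$-homogeneity, equivalently the weight decomposition under the contracting action, lets me choose the embedding \emph{equivariant}, sending $o$ to $0$ and realizing the generator $\gamma$ as an invertible linear map $A$; the contraction property $\chi(\gamma)<1$ forces every eigenvalue of $A$ to have norm $<1$, so $A$ is of the type defining a linear Hopf manifold. Deleting the apex yields an equivariant holomorphic embedding $\tilde M\hookrightarrow \C^N\setminus\{0\}$, and passing to the quotient by $\Gamma$ produces the desired holomorphic embedding $M=\tilde M/\Gamma\hookrightarrow (\C^N\setminus\{0\})/\langle A\rangle=H_A$.

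Finally, for the Vaisman refinement I would use that the Lee and anti-Lee fields $B$ and $E=-JB$ are Killing, preserve $J$ and have commuting flows generating a holomorphic $\C^*$-action by homotheties of $\Omega$ on $C$, as recalled before Theorem~\ref{thm:holemb}. Since this action is reductive, averaging over its maximal compact $S^1$, equivalently decomposing $\cO(C)$ into $\C^*$-weight spaces, lets me linearize it and choose the embedding of the previous paragraph so that $A$ is diagonalizable. Hence in the Vaisman case $M$ embeds holomorphically into a diagonal Hopf manifold $(H_A,J_A)$, which completes the argument.
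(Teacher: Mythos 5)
This theorem is quoted in the paper from \cite{OV09,OV10} without an internal proof; the only argument the paper supplies is the recalled skeleton of the Ornea--Verbitsky construction (pass to the smallest exact covering $\tilde{M}$, where by \cite{OV10b} the automorphic potential is proper, form the one-point Stein compactification $\hat{\tilde{M}}$, and build a holomorphic map to $\C^N$ equivariant under $\gamma\mapsto A$), which is exactly the route you follow. Your proposal is therefore essentially the same as the source argument, including the correct identification of where $\dim_{\C}M\geq 3$ enters (the Rossi/Andreotti--Siu--Boutet de Monvel filling step, which fails in complex dimension $2$) and the linearization of the contraction $\gamma$ (respectively of the $\C^*$-action in the Vaisman case) to land in a linear (respectively diagonal) Hopf manifold.
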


 Because we want to  eventually prove theorem \ref{thm:mosercor}
 we will work with l.c.K. structures with integral period lattice.

\subsection{Families of l.c.K. structures and Moser type results.}

Extending previous work of Banyaga \cite{Ba02}, Bande and
Kotschick \cite{BK09} have proved a Moser stability type result
for l.c.s. structures. Here we only state a particular case which
will suffice for our purposes.

\begin{theorem} (Corollary 3.3. in \cite{BK09})\label{thm:moser}. Let $\Phi_t$, $t\in [0,1]$, be a smooth family of l.c.s.
structures on a compact manifold $M$ such that the corresponding Lee forms $\w_t$ have the same de Rham cohomology class.
Suppose there exists a smooth family of 1-forms $\a_t$ such that $\Phi_t=d_{\w_t}\a_t$. Then there exists
 an isotopy $\phi_t$ such that $\phi^*_t\Phi_t$ is conformally equivalent to $\Phi_0$ for all $t$.
\end{theorem}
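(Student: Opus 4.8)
The plan is to run Moser's method directly in the twisted de Rham complex. I look for a time-dependent vector field $X_t$ whose flow $\phi_t$ (with $\phi_0=\mathrm{id}$) satisfies $\frac{d}{dt}(\phi_t^*\Phi_t)=\mu_t\,\phi_t^*\Phi_t$ for some smooth family of functions $\mu_t$; integrating this pointwise linear ODE in $t$ then yields $\phi_t^*\Phi_t=e^{\lambda_t}\Phi_0$ with $\lambda_t=\int_0^t\mu_s\,ds$, which is exactly the asserted conformal equivalence. So the whole problem reduces to producing $X_t$ such that $\dot\Phi_t+\cL_{X_t}\Phi_t$ is a pointwise multiple of $\Phi_t$.

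The key is the computation of $\dot\Phi_t$. Since the Lee classes $[\w_t]$ are constant in $t$, the form $\dot\w_t$ is exact, and on the compact manifold $M$ I may choose (e.g.\ via Hodge theory) a smooth family of functions $h_t$ with $\dot\w_t=dh_t$. Differentiating $\Phi_t=d_{\w_t}\a_t=d\a_t-\w_t\wedge\a_t$ gives $\dot\Phi_t=d_{\w_t}\dot\a_t-dh_t\wedge\a_t$, and since $d_{\w_t}(h_t\a_t)=dh_t\wedge\a_t+h_t\Phi_t$ one obtains the clean splitting
\[\dot\Phi_t=d_{\w_t}(\dot\a_t-h_t\a_t)+h_t\Phi_t,\]
a $d_{\w_t}$-exact term plus a conformal term. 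Using non-degeneracy I then define $X_t$ by $i_{X_t}\Phi_t=-(\dot\a_t-h_t\a_t)$, i.e.\ $X_t=-\flat_{\Phi_t}^{-1}(\dot\a_t-h_t\a_t)$.

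It remains to assemble the pieces. The twisted Cartan formula $\cL_X=d_\w i_X+i_X d_\w+\w(X)$ together with $d_{\w_t}\Phi_t=0$ gives $\cL_{X_t}\Phi_t=d_{\w_t}i_{X_t}\Phi_t+\w_t(X_t)\Phi_t$, and by the choice of $X_t$ the two $d_{\w_t}$-exact terms cancel, leaving $\dot\Phi_t+\cL_{X_t}\Phi_t=(h_t+\w_t(X_t))\Phi_t$. Since $M$ is compact, $X_t$ integrates to a global isotopy $\phi_t$, and then $\frac{d}{dt}(\phi_t^*\Phi_t)=\phi_t^*\bigl((h_t+\w_t(X_t))\Phi_t\bigr)=\mu_t\,\phi_t^*\Phi_t$ with $\mu_t=\phi_t^*(h_t+\w_t(X_t))$, completing the argument as above. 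The step I expect to carry the real content is the splitting of $\dot\Phi_t$: it is precisely the cohomological hypothesis on the Lee forms that permits the choice of $h_t$ and thereby isolates the unavoidable conformal term $h_t\Phi_t$. That surviving term is exactly why the conclusion is conformal equivalence rather than the strict equality one gets in Tischler/symplectic Moser stability; compactness (to integrate $X_t$) and non-degeneracy (to invert $\flat_{\Phi_t}$) are the remaining two essential ingredients.
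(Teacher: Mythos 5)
The paper does not actually prove this statement---it is quoted from Bande--Kotschick \cite{BK09} (their Corollary 3.3) and used as a black box---so there is no internal proof to compare against; your argument is correct and is essentially the twisted Moser argument by which the result is established in \cite{BK09}: the twisted Cartan formula $\cL_X=d_\w i_X+i_Xd_\w+\w(X)$, the splitting $\dot\Phi_t=d_{\w_t}(\dot\a_t-h_t\a_t)+h_t\Phi_t$, and the pointwise scalar ODE all check out, with the smooth choice of $h_t$ (and the exactness of $\dot\w_t$) justified on the compact $M$ by Hodge theory. The only cosmetic difference is that \cite{BK09} first normalizes the Lee form to be $t$-independent by a preliminary conformal rescaling before running Moser, whereas you keep $\w_t$ varying and absorb $\dot\w_t=dh_t$ into the primitive, the surviving term $h_t\Phi_t$ playing exactly the role of that rescaling and explaining why the conclusion is conformal rather than strict equivalence.
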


In a compact K\"ahler manifold the convex combination of any two
K\"ahler forms is K\"ahler, so cohomologous K\"ahler forms define
the same symplectic structure up to a diffeomorphism isotopic to
the identity.

The space of l.c.K. structures with fixed Lee form is discussed in \cite{OV09}. One can slightly generalize those results
to obtain  stability for the conformal class of the underlying l.c.s. structures.

\begin{lemma}\label{lem:lckmoser} Let $(M,J)$ be a complex manifold. Then the space of l.c.K. structures with
fixed Lee class is connected. Moreover, the same holds for l.c.K. structures with automorphic potential.
In particular if $M$ is compact the action of the group of diffeomorphisms isotopic to the identity
 on the conformal classes of
 l.c.s. structures with l.c.K. representatives with automorphic potential, has orbits parametrized by the Lee class.
\end{lemma}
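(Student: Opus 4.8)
The plan is to prove all three assertions by a single mechanism: a convexity argument inside each twisted de Rham complex, the conformal rescaling furnished by the chain maps $e^{f}\colon\Omega^*(M,d_\w)\to\Omega^*(M,d_{\w+df})$ of (\ref{eq:innerw}), and finally Moser stability (theorem \ref{thm:moser}) for the last assertion. Two elementary facts drive everything. First, with $J$ fixed and integrable, a l.c.K. structure on $(M,J)$ is the same datum as a positive real $(1,1)$-form $\Phi$ which is $d_\w$-closed, and positivity of $(1,1)$-forms is a \emph{pointwise convex} condition; hence convex combinations of l.c.K. forms with a common Lee form are again l.c.K. Second, multiplication by a positive function $e^{g}$ carries a $d_\w$-closed form to a $d_{\w+dg}$-closed form, since $d_{\w+dg}(e^{g}\beta)=e^{g}d_\w\beta$.

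For the first assertion, let $\Phi_0,\Phi_1$ be l.c.K. forms with Lee forms $\w_0,\w_1$ in a common de Rham class, say $\w_1-\w_0=df$. I first rescale: the path $e^{-sf}\Phi_1$, $s\in[0,1]$, runs through l.c.K. forms (each is $e^{-sf}$ times a positive $(1,1)$-form, hence positive $(1,1)$) whose Lee forms $\w_1-s\,df$ stay in the fixed class, ending at $\Phi_1':=e^{-f}\Phi_1$ with Lee form $\w_0$. Now $\Phi_0$ and $\Phi_1'$ share the Lee form $\w_0$, so the convex combination $(1-t)\Phi_0+t\Phi_1'$ is $d_{\w_0}$-closed by linearity of $d_{\w_0}$ and is positive $(1,1)$; it is therefore l.c.K. with Lee form $\w_0$. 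Concatenating the two paths connects $\Phi_0$ to $\Phi_1$ inside the fixed Lee class.

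For the second assertion I run the same two steps while tracking the potential. Since having vanishing Bott-Chern class is conformally invariant, the rescaled forms keep an automorphic potential; explicitly, if $\Phi_1=d_{\w_1}d^c_{\w_1}\phi_1$ then $e^{-sf}\Phi_1=d_{\w_1-s\,df}\bigl(e^{-sf}d^c_{\w_1}\phi_1\bigr)$, again by the intertwining. In the convex step, writing $\Phi_0=d_{\w_0}d^c_{\w_0}\phi_0$ and $\Phi_1'=d_{\w_0}d^c_{\w_0}\phi_1'$, the combination equals $d_{\w_0}d^c_{\w_0}\bigl((1-t)\phi_0+t\phi_1'\bigr)$, so it has vanishing Bott-Chern class; and since, via lemma \ref{lem:corresp}, $\phi_0$ and $\phi_1'$ lift to automorphic functions for the \emph{same} character determined by $\w_0$, their convex combination is again automorphic. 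Hence the concatenated path stays among l.c.K. structures with automorphic potential, giving connectedness of that space as well.

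For the last assertion, the Lee class is a $\mathrm{Diff}_0$-invariant of a conformal class: pullback sends the Lee form $\w$ of $\Phi$ to the Lee form $\phi^*\w$ of $\phi^*\Phi$, a conformal change shifts $\w$ by an exact form, and a diffeomorphism isotopic to the identity acts trivially on $H^1_{\mathrm{dR}}(M)$, so two conformal classes in one orbit have equal Lee class. Conversely, given $\Phi_0,\Phi_1$ with automorphic potential and equal Lee class, the path $\Phi_t$ above is a smooth family of exact l.c.s. structures whose Lee forms stay cohomologous, and the displayed formulas provide a smooth family of l.c.s. potentials $\a_t$ (namely $e^{-sf}d^c_{\w_1}\phi_1$ on the rescaling part and $d^c_{\w_0}((1-t)\phi_0+t\phi_1')$ on the convex part) with $\Phi_t=d_{\w_t}\a_t$. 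Theorem \ref{thm:moser} then yields an isotopy $\phi_t$ with $\phi_t^*\Phi_t$ conformally equivalent to $\Phi_0$, so $\Phi_0$ and $\Phi_1$ lie in the same $\mathrm{Diff}_0$-orbit and the orbits are exactly the fibers of the Lee-class map. The routine inputs are the convexity of positivity and the intertwining identity; the step I expect to be the main obstacle is arranging the two-phase path so that Moser's hypotheses hold \emph{simultaneously and smoothly}, i.e. keeping the Lee class constant while retaining a genuinely smooth family $\a_t$ across the junction of the rescaling and convex phases (handled by reparametrizing, or by applying theorem \ref{thm:moser} separately on each phase and composing the isotopies).
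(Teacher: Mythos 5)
Your proof is correct and is essentially the paper's argument read downstairs: the paper passes to an exact covering space, convexly combines the automorphic K\"ahler forms $\Omega_t=(1-t)\Omega+t\Omega'$ and the automorphic potentials $r_t=(1-t)r+tr'$ for the common character, and then applies theorem \ref{thm:moser}; via the correspondence of lemma \ref{lem:corresp} this is exactly your conformal-rescaling-plus-convex-combination of $d_{\w_0}$-closed positive $(1,1)$-forms and their Bott--Chern potentials. The only cosmetic difference is that the paper interpolates the scaling functions $f_t=(1-t)f+tf'$ simultaneously with the forms, producing a single smooth path of representatives and thereby avoiding the phase-junction smoothness point you flag at the end.
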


\begin{proof} Let $\Phi_g,\Phi_{g'}$ be two l.c.K. structures in $(M,J)$ with the same Lee class $[\w]$.
Let $(\tilde{M},\tilde{J})$ be a exact covering space. Then one has scaling functions
 $f,f'\in C^\infty(\tilde{M})$ and automorphic K\"ahler forms $\Omega=e^{-f}\tilde{\Phi}_g,\Omega'=e^{-f'}\tilde{\Phi}_{g'}$.

The convex combination
$\Omega_t=(1-t)\Omega+t\Omega'$ defines a
family of automorphic  K\"ahler forms (for $\chi(e^{-[\w]})$). Thus, they
define a 1-parameter family of conformal classes of l.c.K.
structures with Lee class $[\w]$. It is easy to find a smooth path
of representatives by just choosing the path of functions $f_t=(1-t)f+tf'$ which have all
additive character $[\w]$.

If the given l.c.K. structures have automorphic potential $r,r'$,
then $r_t=(1-t)r+tr'$ is an automorphic potential for $\Phi_{g_t}$.

Thus,  if we are in a compact manifold we can apply theorem
\ref{thm:moser} and this proves the lemma.
\end{proof}

\begin{remark}\label{rem:exact} {\rm The stability result in lemma \ref{lem:lckmoser} also holds for exact l.c.K.
 structures by applying  Hodge theory  to find potential 1-forms
 \cite{BK09}.}
\end{remark}

All linear Hopf manifolds $H_A$ are diffeomorphic to $S^{2N-1}\times S^1$. We want to show that,
in an appropriate sense, the  conformal class of the  l.c.s. structure induced by any $\Phi_A$
with integral period lattice is unique (see remark \ref{rem:isot}).

\begin{proposition}\label{pro:linhopf} If  $A\in GL(N,\C)$ has eigenvalues of norm smaller than 1, and  $(H_A,J_A,\Phi_A,\w_A)$ is a
l.c.K. structure with automorphic potential and integral period
lattice as constructed in \cite{KO05,OV10a},  then there exists a
diffeomorphism
\[\phi_A\colon (S^{2N-1}\times S^1,d\theta)\rightarrow (H_A,\w_A)\] which is a (full) morphism and pulls back the
conformal class of $\Phi_A$ to the  conformal class of the standard l.c.s. structure with integral period lattice $\Phi_N$.
\end{proposition}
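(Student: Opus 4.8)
The plan is to realize $\phi_A$ as a composition of the standard diffeomorphism type of Hopf manifolds with two applications of the Moser stability result of Theorem \ref{thm:moser}: the first deforms the \emph{linear} matrix $A$ to a diagonalizable one $A'$ (at the cost of moving the complex structure, as in Remark \ref{rem:family}), and the second deforms the eigenvalues of the resulting \emph{diagonal} Hopf manifold to those of a scalar matrix $\lambda\,\mathrm{Id}$, whose associated Vaisman structure with integral period lattice is by definition the standard $\Phi_N$. Throughout, I would keep the period lattice integral, so that all Lee forms are cohomologous to $d\theta$ once everything is transported to $S^{2N-1}\times S^1$, which is exactly what is needed to invoke Theorem \ref{thm:moser}.

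First I would record the underlying diffeomorphisms. Every linear Hopf manifold $H_A$ is diffeomorphic to $S^{2N-1}\times S^1$, and the diagonalization construction recalled before Remark \ref{rem:family} provides a diffeomorphism $H_A\cong H_{A'}$ carrying $(J_A,\Phi_A,\w_A)$ to $(J_{A_1},\Phi_{A_1},\w_{A'})$ on the diagonal Hopf manifold $H_{A'}$, where $A_1=A$ and $\w_{A'}$ is the fixed Lee form. By Remark \ref{rem:family} there is a path $(H_{A'},J_{A_t},\Phi_{A_t})$, $t\in[0,1]$, of l.c.K. structures with automorphic potential sharing the same Lee form $\w_{A'}$ and the same automorphic potential in the exact covering space. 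Since this potential is held fixed while only $J_{A_t}$ varies smoothly, applying $d_{\w_{A'}}^c$ (which depends smoothly on $J_{A_t}$) produces a smooth family of potential $1$-forms $\a_t$ with $\Phi_{A_t}=d_{\w_{A'}}\a_t$. Because the Lee forms are \emph{literally} equal, Theorem \ref{thm:moser} yields an isotopy $\varphi_t$ with $\varphi_1^*\Phi_A$ conformally equivalent to the Vaisman form $\Phi_{A_0}=\Phi_{A'}$.

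Next I would treat the diagonal case. Writing $A'=\mathrm{diag}(\mu_1,\dots,\mu_N)$ with $\mu_j=e^{-a_j+ib_j}$, $a_j>0$, I would connect $A'$ to $\lambda\,\mathrm{Id}$ by the diagonal path $A'_s$ whose $j$-th entry is $e^{-((1-s)a_j+sa)+i(1-s)b_j}$; this stays diagonalizable with eigenvalues of norm $<1$, and $a$ (hence $\lambda=e^{-a}$) is the value singled out by integrality. For each $s$ I take the normalized Vaisman structure $\Phi_{A'_s}$ with parameter $q=1$, whose Lee form therefore has period lattice exactly $\Z$. Transporting each $(H_{A'_s},\Phi_{A'_s},\w_{A'_s})$ to $S^{2N-1}\times S^1$ by the explicit, $s$-smooth radial diffeomorphism adapted to $A'_s$, arranged so that the Lee direction is the $S^1$ factor, I obtain a smooth family of exact l.c.s. structures on $S^{2N-1}\times S^1$ all with Lee form $d\theta$ and with smoothly varying potential $1$-forms. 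A second application of Theorem \ref{thm:moser} then makes the conformal class of $\Phi_{A'}$ agree, up to a diffeomorphism isotopic to the identity, with that of $\Phi_{\lambda\,\mathrm{Id}}=\Phi_N$. Composing the diffeomorphism $H_A\cong S^{2N-1}\times S^1$ with the two Moser isotopies gives $\phi_A$; it is a morphism because its pullback of $\w_A$ is cohomologous to $d\theta$ (the Lee class generates $H^1\cong\Z$ and is integral), and it is full since a diffeomorphism preserves period lattices.

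The main obstacle I expect is the bookkeeping of Lee forms across the two deformations. In the first deformation the complex structure moves but the Lee form is held fixed, so Theorem \ref{thm:moser} applies immediately; in the second the \emph{underlying complex manifold} $H_{A'_s}$ genuinely changes with $s$, so one cannot invoke the fixed-$(M,J)$ statement of Lemma \ref{lem:lckmoser} and must instead transport everything to the single smooth manifold $S^{2N-1}\times S^1$ and check that all Lee forms become the \emph{same} closed $1$-form $d\theta$ (equivalently, that the period lattice stays $\Z$, which the normalization $q=1$ guarantees). Verifying that the transporting diffeomorphisms can be chosen smoothly in $s$ and compatibly with the $S^1$-factor, so that the hypotheses of Theorem \ref{thm:moser} hold for the whole family, is the one genuinely technical point; the remaining verifications (smoothness of $\a_t$, fullness of $\phi_A$) are routine.
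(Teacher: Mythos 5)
Your overall strategy is the paper's own: first use Remark \ref{rem:family} to trade $A$ for a diagonalizable $A'$ along a path of l.c.K.\ structures with the same Lee form and the same automorphic potential, and apply Theorem \ref{thm:moser}; then deform the eigenvalues of a diagonal matrix to those of a scalar one, transporting all the structures to a single smooth manifold by explicit radial (mapping-torus) diffeomorphisms and applying Theorem \ref{thm:moser} a second time -- and you correctly identify why Lemma \ref{lem:lckmoser} cannot be used at that stage (the transporting diffeomorphisms are not holomorphic). Your single path $e^{-((1-s)a_j+sa)+i(1-s)b_j}$ merges what the paper does in two separate stages (scaling the norms, with an auxiliary path of Hermitian metrics when the moduli differ, followed by a clockwise rotation of the phases), and this streamlining is legitimate for the same reason the paper's version works: the Kamishima--Ornea automorphic potentials on diagonal Hopf manifolds depend smoothly on the eigenvalues, so the normalized integral Vaisman structures $\Phi_{A'_s}$ vary smoothly along your path.

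The one genuine gap is that you pass from ``diagonalizable'' to ``diagonal'' silently: after the first step you only know $A'$ is diagonalizable, yet you immediately write $A'=\mathrm{diag}(\mu_1,\dots,\mu_N)$, and your path $A'_s$ only makes sense starting from a genuinely diagonal matrix. The paper devotes its entire second step to this point: if $A'=LDL^{-1}$ with $D$ diagonal, then $L$ induces a biholomorphism $H_{A'}\cong H_{D}$, and one must check that the integral Vaisman structure associated to $D$ is carried into the conformal class you are holding on $H_{A'}$; the paper argues this via the conjugation-covariance of the potentials on $\C^N\setminus\{0\}$ (equivalently, you could pull back the integral Vaisman structure of $H_D$ through the biholomorphism and connect it to $\Phi_{A'}$ using Lemma \ref{lem:lckmoser}, which now applies because the complex manifold is fixed). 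This is a short patch, and with it your argument closes up; the remaining technical point you flag -- choosing the trivializing diffeomorphisms smoothly in $s$ and compatibly with the $S^1$ factor so that all Lee forms become literally $d\theta$ -- is resolved in the paper by the explicit linear maps $k_t\times\mathrm{Id}$ on annular fundamental domains, and the same device works for your path.
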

\begin{proof}

According to theorem \ref{thm:moser} we just need to find a diffeomorphism  such that $\phi_A^*\Phi_A$ and $\Phi_N$
can be joined by a (piecewise) smooth path of l.c.K. structures -for possibly different complex structures- 
with integral period lattice, and with potential 1-forms
 varying smoothly.

We do it in several steps. We assume that $A$ is not diagonalizable. By remark \ref{rem:family} we can find $A'$ diagonalizable
and a diffeomorphism $\phi_1\colon H_{A'}\rightarrow H_{A}$ so that $\Phi_{A'}$ and $\phi_1^*\Phi_{A}$ can be joined by a
 smooth family of l.c.K. structures with automorphic potential and integral period lattice, and such that the Lee form
 and automorphic potential are the same. Therefore, we can assume without loss of generality that $A$ is diagonalizable.
 Note as well that
 the deformation argument in \cite{OV10a} may produce many  l.c.K. structures with automorphic potential and integral period
 lattice. By lemma  \ref{lem:lckmoser} they all belong to the same  conformal class.

The second step amounts to comparing all  l.c.K. Vaisman structures with integral period lattice in a given $GL(N,\C)$ orbit.
 In  \cite{KO05} for $A$ diagonal an automorphic potential is given defining a Vaisman structure with integral period lattice.
 The associated K\"ahler metric in $\C^N\backslash \{0\}$ is invariant by conjugation by $GL(N,\C)$, since it is defined by a
  potential. Given any $A,A'$ in the same orbit, if $A'=L*A$, then $L$ pushes any path of  l.c.K. structures with automorphic
  potential and integral period lattice starting at the Vaisman structure $\Phi_A$, into a path of  l.c.K.
 structures with automorphic potential and integral period lattice  starting at the Vaisman structure $\Phi_{A'}$. 
This implies that we can assume $A$ to be
   diagonal.

The automorphic potentials for the  Vaisman  structures with integral period lattice  in \cite{KO05} in diagonal Hopf manifolds,
 depend smoothly on the eigenvalues (see also \cite{OV09}, section 2.2, for an explicit formula).  Let $A$ be diagonal. 
We assume that all eigenvalues have norm $q$ . We let $A'=A/2q$ and consider the convex combination $A_t=(1-t)A+tA'$ with 
eigenvalues whose norm is $q_t$.

A fundamental domain for $H_{A_t}$ is the closed annulus $\mathbb{A}(q_t,1)$ of Euclidean radii $q_t,1$, and the manifold 
is obtained by applying the same diffeomorphism $S^{2N-1}\rightarrow S^{2N-1}$ for all $t$. Let 
$k_t\colon [q_t,1]\rightarrow [1/2,1]$ be the linear orientation preserving diffeomorphism. Then for the product 
decomposition of the annuli into radial and spherical coordinates,
\[k_t\times \mathrm{Id}\colon  \mathbb{A}(g_t,1)\rightarrow \mathbb{A}(1/2,1),\]
is a diffeomorphism which descends to a diffeomorphism
\[K_t\colon H_{A_t}\rightarrow H_{A'}.\]
Therefore $K_{t*}\Phi_{A_t}$ is a path of  l.c.K. structures (for the induced complex structures) with automorphic
 potential and integral period lattice   which connects $\Phi_{A'}$ with $K_{0*}\Phi_A$. Then by Moser stability
 the positive conformal class is the same, which implies that we may scale the eigenvalues at will. If  not all
 eigenvalues have the same norm, they do have the same norm for an obvious Hermitian metric. By connecting that 
metric with the Euclidean one, and using the corresponding path of integral Vaisman structures, we may 
assume without loss of generality
 that all eigenvalues have norm 1/2.

We note that in the previous considerations we really need the path of l.c.K. structures connecting the two sets of
 eigenvalues; the diffeomorphisms that we are considering between different diagonal Hopf manifolds are not holomorphic
 in general, so one cannot apply lemma \ref{lem:lckmoser}.

The final step is correcting the argument of the eigenvalues. If we identify the spheres of radius
 $1$, $1/2$ by the homothety, we obtain the isomorphism
\[(H_{\mathrm{Id}/2},\Phi_{\mathrm{Id}/2},d_{\w_{\mathrm{Id}/2}}^c,\w_{\mathrm{Id/2}})\cong ( S^{2N-1}\times S^1,\Phi_N,\eta_N,d\theta).\]
Also $H_A$ is a mapping torus over $S^1$ with return map $\varphi_A\colon S^{2N-1}\rightarrow S^{2N-1}$, 
which is clearly isotopic to the identity. We can for example construct a path of diagonal matrices $A_t$
 joining $A$ with $\mathrm{Id}/2$  by rotating the eigenvalues clockwise until we reach $1/2$. It is easy 
to produce diffeomorphisms
\[K_t\colon H_{A_t}\rightarrow H_{\mathrm{Id}/2}\] thus getting a path of l.c.K. structures 
 (for the induced complex structure) with automorphic potential and integral period lattice 
 connecting $\Phi_{N}$ with $K_{0*}\Phi_A$.
 \end{proof}

\begin{remark}\label{rem:isot} {\rm  Linear Hopf manifolds are not canonically diffeomorphic to $S^{2N-1}\times S^1$.
 Proposition \ref{pro:linhopf} has produced for $H_A$ many diffeomorphisms $H_A\rightarrow H_{\mathrm{Id}/2}$
 in the same isotopy class taking $\Phi_A$ to the positive conformal class of $\Phi_N$.
Therefore a fortiori the  diffeomorphism class of the positive conformal classes of 
$\Phi_A$ and $\Phi_N$ coincide, and it is in this sense that the positive conformal class 
of $\Phi_A$ in $S^{2N-1}\times S^{1}$ is unique.}
\end{remark}

\begin{proof}[Proof of theorem \ref{thm:mosercor}]

Let $(M,J,\Phi_g)$ be the given l.c.K. structure with automorphic
potential and integral period lattice. Suppose that $\Psi$ is a
holomorphic embedding of $(M,J)$ into the linear Hopf manifold
$(H_A,J_A)$ with a l.c.K. structure $\Phi_A$ with automorphic
potential $r_A$, and Lee form $\w_A$ with integral period lattice
(see \cite{OV10}).

By proposition \ref{pro:linhopf} we have $\phi\colon
H_A\rightarrow S^{2N-1}\times S^1$ a diffeomorphism pulling back
$\Phi_N$ to the positive conformal class of $\Phi_A$. Because $\Psi^*\colon (M,\Psi^*\w_A)\rightarrow  (H_A,\w_A)$ is a morphism,
$\Psi^*\Phi_A$ is $d_{\Psi^*\w_A}$-closed. Non-degeneracy follows from the fact that $\Psi^*\Phi_A(\cdot,J\cdot)$ is the restriction
of the Hermitian metric $\Phi_A(\cdot,J_A\cdot)$. Therefore, by lemma \ref{lem:lckmoser} we just need to show
that $[\Psi^*\w_A]=[\w_g]$ and that $\Psi^*\Phi_A$ is a l.c.K.  structure with automorphic
potential.

To this end we need to recall some aspects of the construction of
 the holomorphic embedding: The manifold $(M,J)$ carries a l.c.K. structure with integral period lattice.
 In the smallest exact covering space  $(\tilde{M},\tilde{J},\Omega)$, whick has deck transformation group $\Gamma\cong \Z$ 
generated
 by the contraction $\gamma$, one constructs  a holomorphic map from the 1-point Stein compactifications
\[\tilde{\Psi}\colon \hat{\tilde{M}}\rightarrow \C^n\]
which is equivariant with respect to the group isomorphism $\Gamma\rightarrow \langle A \rangle$, $\gamma\mapsto A$.
Then one gets the (holomorphic) commutative diagram of morphisms
\begin{equation}\label{cd1}\begin{CD}
(\tilde{M},0) @>\tilde{\Psi}>>(\C^N\backslash \{0\},0) \\
@V\pi_\gamma VV  @V\pi_A VV\\
(M,\Psi^*\w_A) @>\Psi>>(H_A,\w_A)
\end{CD}
\end{equation}
By lemma  \ref{lem:corresp} the additive character $[\w_A]$ characterizes the multiplicative character $\chi_A$. Because
$\tilde{\Psi}$ is equivariant with respect to the action of the groups of deck transformations and the morphism relating
the deck transformation groups is an isomorphism, we have $\Psi^*\chi_A=\chi_{[\w_g]}$. By lemma \ref{lem:corresp} and commutativity
of (\ref{cd1}) we conclude 
\[[\Psi^*\w_A]=[\w_g].\]
Because  $\tilde{\Psi}$ is equivariant w.r.t. deck transformations, it pulls back an automorphic potential for $\Omega_A$ (w.r.t.
 $\chi_{[\w_A]}$) into an automorphic potential for $\tilde{\Psi}^*\Omega_A$ (w.r.t. $\chi_{[\Psi^*\w_A]}$). Therefore 
  by commutativity of   (\ref{cd1}) it follows that 
the conformal class of l.c.K. structures defined by $\Psi^*\Phi_A$ has automorphic potential.

\end{proof}

\begin{remark}{\rm Theorem  \ref{thm:mosercor} holds more generally for arbitrary l.c.K. structures
with automorphic potential (see remark \ref{rem:rank1}).}
\end{remark}

\section{Universal models for reduction of l.c.s.
structures of the first kind}\label{sec:4}

Any symplectic structure in a manifold of finite type can be obtained by reduction of the standard symplectic
structure in $\R^{2n}$ \cite{GT89}. 

Reduction has been extended for l.c.s. structures \cite{HK01}.
Among l.c.s. structures, those of the first kind bear relations
with contact and cosymplectic geometry (see remark
\ref{rem:firstk}). Based on the universal models for reduction for
the latter structures,  one is led to consider the following
family of l.c.s. manifolds of  the first kind: For each pair of
natural numbers $(N,k)$,  we define
\begin{equation}\label{eq:redmodel}
M_{k,N}=\R\times \cJ^1(\mathbb{T}^k\times \R^N), 
\end{equation}
where $\cJ^1(\mathbb{T}^k\times \R^N)$ is the $1$-jet bundle of
the cartesian product of the $k$-dimensional torus and $\R^N$.
Denote by $s$ the coordinate on the first factor in (\ref{eq:redmodel}), by $u$ the real
coordinate in $\cJ^1(\mathbb{T}^k\times \R^N)=\R\times
T^*(\mathbb{T}^k\times \R^N)$, by $(t_1,\dots,t_N)$ the
coordinates in $\R^N$, and by $\theta_1,\dots,\theta_k$ the periodic
coordinates in
 $\mathbb{T}^k$ (with period 1). If   $\mu=(\mu_1,\dots,\mu_k)$ is
 a $k$-tuple
of real numbers, a computation shows that the 1-forms
\[\w_{\mu}=ds+\sum_{j=1}^k\mu_jd\theta_j,\,\a_{k,N}=du-\lambda_{\mathbb{T}^k\times \R^N} \]
fit into a l.c.s. structure of the first kind
\[\Phi_{k,N,\mu}:=d\a_{k,N}-\w_{\mu}\wedge \a_{k,N}.\]
The t.i.a. associated to $\a_{k,N}$  and the anti-Lee vector field
are respectively
\[B=\frac{\partial}{\partial s},\,\;\;\; E=-\frac{\partial}{\partial u}.\]

Let $(M,\Phi,\alpha)$ and $(M',\Phi',\alpha')$  be  l.c.s. manifolds of the first kind. A diffeomorphism
$\Psi$ is said to be of the first kind if it is a strict morphism and satisfies $\Psi^*\a'=\a$.  In such a case, we have that 
$\Psi^*\Phi'=\Phi$, and the associated t.i.a. and the anti-Lee vector fields are $\Psi$-related. 

The action of $SL(k,\Z)$ in $k$-tuples of real numbers $\mu$ is seen to induce an action on
  $(M_{k,N},\Phi_{k,N,\mu},\a_{k,N})$ by diffeomorphisms of 
the first kind. 
 
Before proving that $(M_{k,N},\Phi_{k,N,\mu},\a_{k,N})$ are the
universal models we are looking for, we need to say a few words
about reduction of l.c.s. structures.

Among the results in \cite{HK01},  conditions mimicking coisotropic symplectic reduction 
are imposed on a submanifold $C$ of a l.c.s manifold $(M,\Phi,\w)$,
such that the leaf space associated to the involutive distribution  $\mathrm{ker}\Phi_{\mid C}$ inherits a l.c.s.
structure ($(C,\mathrm{ker}\Phi_{\mid C})$ is a \emph{reductive structure} \cite{HK01}): let $\cF$ be 
the distribution integrating $\mathrm{ker}\Phi_{\mid C}$. Assuming $C/\mathcal{F}$ to be a manifold, one wants the projection
 $(C,\w_{\mid C})\rightarrow C/\mathcal{F}$ to become a strict morphism, so $\w_{\mid C}$ is asked to be $\cF$-basic. Then exactly
the same proof used for symplectic coisotropic reduction produces a l.c.s. form in the quotient whose pullback is 
$\Phi_{\mid C}$. We are interested in finding further conditions so that l.c.s. structures of the first kind are preserved
under reduction.

\begin{lemma}\label{lem:red} Let $(\Phi,\a)$ be a l.c.s. structure of the first kind on $M$ with Lee form 
$\w$ and associated t.i.a. $B$. Let  $C$  be a submanifold of $M$ such that the following properties hold:
\begin{enumerate}
 \item $B$ and $E$ are tangent to $C$.
\item The involutive distribution $\mathrm{ker}{\w}_{\mid C}\cap \mathrm{ker}{\a}_{\mid C}\cap \mathrm{ker}{d\a}_{\mid C}$
 has constant rank, thus defining a foliation $\cF$.
\item The leaf space $M_0=C/\cF$ has a manifold structure induced by the projection $\pi\colon C\rightarrow M_0$.
\end{enumerate}

Then $M_0$ inherits a l.c.s. structure of the first kind $(\Phi_0,\a_0)$ with Lee form $\w_0$ characterized by 
$\pi\colon (C,\w_{\mid C})\rightarrow (M_0,\w_0)$ being a strict morphism such that $\a_{\mid C}=\pi^*\a_0$ 
(and thus $\Phi_{\mid C}=\pi^*\Phi_0$).
The associated t.i.a. and anti-Lee vector fields are the projection of $B$ and $E$ respectively (which are $\cF$-projectable).

We say the $C$ is {\it strongly reducible} and that $(M_0,\Phi_0,\a_0)$ is the reduction of $(M,\a,\w)$ (by $C$).
\end{lemma}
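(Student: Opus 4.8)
The plan is to reduce the entire statement to the single structural identity
\[\cF=\ker(\Phi_{\mid C}),\]
after which the construction becomes the twisted analogue of coisotropic symplectic reduction. To obtain it I would first record two pointwise identities coming from $\Phi=d_\w\a$ together with $\a=-\flat_\Phi(B)$ and $\w=-\flat_\Phi(E)$: for every $X\in TC$ one has $\a(X)=\Phi(X,B)$ and $\w(X)=\Phi(X,E)$. By hypothesis (1), $B$ and $E$ are tangent to $C$, so any $X\in\ker(\Phi_{\mid C})$ automatically satisfies $\a(X)=\w(X)=0$; conversely, on the subspace $\ker\w_{\mid C}\cap\ker\a_{\mid C}$ the contraction $i_X(\w\wedge\a)$ vanishes, whence $i_Xd\a=i_X\Phi$ there. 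Putting these together shows that the three conditions cutting out $\cF$ are jointly equivalent to $i_X\Phi_{\mid C}=0$, i.e. $\cF=\ker(\Phi_{\mid C})$. This is the heart of the matter, and it is exactly here that tangency (1) is used. Involutivity and constant rank of $\cF$ are then either taken from (2) or read off from $d\Phi_{\mid C}=\w_{\mid C}\wedge\Phi_{\mid C}$ — a consequence of $d_\w\Phi=d_\w^2\a=0$ — which forces $d\Phi_{\mid C}$ to vanish on $\ker\Phi_{\mid C}$.

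Next I would verify that $\w_{\mid C},\a_{\mid C},\Phi_{\mid C}$ are $\cF$-basic and therefore descend along the submersion $\pi\colon C\to M_0$ furnished by (3). Horizontality is immediate since $\cF$ lies in the kernel of each form. For invariance one notes that $\w_{\mid C}$ is closed, that $i_Xd\a_{\mid C}=0$ for $X\in\cF$ by the very definition of $\cF$, and that $\Phi_{\mid C}=d\a_{\mid C}-\w_{\mid C}\wedge\a_{\mid C}$ is assembled from basic forms. Hence there are unique forms $\w_0,\a_0,\Phi_0$ on $M_0$ with $\pi^*\w_0=\w_{\mid C}$, $\pi^*\a_0=\a_{\mid C}$, $\pi^*\Phi_0=\Phi_{\mid C}$, and injectivity of $\pi^*$ transports every pointwise relation downstairs: $\w_0$ is closed and $\Phi_0=d_{\w_0}\a_0$. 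Non-degeneracy of $\Phi_0$ is precisely the assertion that the kernel of $\Phi_{\mid C}$ coincides with the vertical distribution $\cF$, which is the identity proved in the first step. This already exhibits $(M_0,\Phi_0,\a_0)$ as an exact l.c.s. structure with Lee form $\w_0$, and $\pi\colon(C,\w_{\mid C})\to(M_0,\w_0)$ as a strict morphism with $\a_{\mid C}=\pi^*\a_0$.

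Finally I would produce the first-kind data. Since $B,E\in\mathfrak{X}_\Phi(M)$ are tangent to $C$, their flows preserve $\Phi_{\mid C}$ and hence its kernel $\cF$, so $B$ and $E$ are $\cF$-projectable and determine $B_0=\pi_*B$, $E_0=\pi_*E$ with $\cL_{B_0}\Phi_0=\cL_{E_0}\Phi_0=0$. Pulling back the defining relations and using injectivity of $\pi^*$ gives $i_{B_0}\Phi_0=-\a_0$, $i_{E_0}\Phi_0=-\w_0$, and $\w_0(B_0)=\w(B)=1$; the last equality makes the Lie algebra morphism $l_0$ nonzero, so $(M_0,\Phi_0,\a_0)$ is of the first kind with transverse infinitesimal automorphism $B_0$ and anti-Lee vector field $E_0$. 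The only genuinely delicate point is the identification $\cF=\ker(\Phi_{\mid C})$, on which non-degeneracy hinges; every remaining step is a formal consequence of $\pi^*$ being injective on forms.
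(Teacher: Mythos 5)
Your proof is correct and takes essentially the same route that the paper's one-line ``routine to check'' proof intends: show that $\w_{\mid C}$, $\a_{\mid C}$, $d\a_{\mid C}$ are $\cF$-basic forms and $B$, $E$ are $\cF$-projectable vector fields, then push everything down through the injective pullback $\pi^*$. Your explicit identification $\cF=\ker(\Phi_{\mid C})$, obtained from the tangency of $B$ and $E$ to $C$, is exactly the point the paper leaves implicit, and it is what secures the non-degeneracy of $\Phi_0$.
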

\begin{proof} It is routine to check that $\w_{|C},\a_{|C},d\a_{|C}$ are $\cF$-basic forms,
$B_{|C},E_{|C}$ $\cF$-projectable vector fields and that they
induce in $M_0$ a l.c.s. with the stated properties.
\end{proof}

 We note that
a concatenation of reductions is in an obvious way a reduction in
just one stage. 

\begin{lemma}\label{lm:red0}
If a l.c.s. manifold of the first kind $(M_3,\Phi_3,\alpha_3)$ is the
reduction of a l.c.s. manifold of the first kind
$(M_2,\Phi_2,\alpha_2)$ by the submanifold $C_2\subseteq M_2$ and
if $(M_2,\Phi_2,\alpha_2)$ is the reduction of a l.c.s. manifold
of the first kind $(M_1,\Phi_1,\alpha_1)$ by the submanifold
$C_1\subseteq M_1,$ then $(M_3,\Phi_3,\alpha_3)$ is the reduction
of $(M_1,\Phi_1,\a_1)$ by the submanifold $C_1'=\pi_1^{-1}(C_2)$, where
$\pi_1\colon C_1\to M_2$ denotes the canonical projection.
\end{lemma}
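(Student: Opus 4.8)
The plan is to verify that $C_1' = \pi_1^{-1}(C_2)$ satisfies the three hypotheses of Lemma \ref{lem:red}, and then to identify the resulting one-stage reduction with $(M_3,\Phi_3,\alpha_3)$ by exhibiting an isomorphism of l.c.s.\ structures of the first kind. First I would set up the notation: let $B_1,E_1$ be the t.i.a.\ and anti-Lee vector field of $(M_1,\Phi_1,\alpha_1)$, with projections $B_2,E_2$ on $M_2$ and $B_3,E_3$ on $M_3$ under the two given reductions $\pi_1\colon C_1\to M_2$ and $\pi_2\colon C_2\to M_3$. Since $B_2,E_2$ are tangent to $C_2$ by hypothesis (1) of the first reduction applied to the second stage, and since $B_1,E_1$ are $\pi_1$-related to $B_2,E_2$, the preimage $C_1'=\pi_1^{-1}(C_2)$ is invariant under the flows of $B_1,E_1$, so hypothesis (1) holds for $C_1'$.

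Next I would check hypotheses (2) and (3). The key observation is that $\pi_1$ restricts to a submersion $C_1'\to C_2$ whose fibers are exactly the leaves of the foliation $\cF_1$ of $C_1$ (these fibers lie inside $C_1'$ precisely because $C_1'=\pi_1^{-1}(C_2)$). One then describes the characteristic distribution $\mathrm{ker}\,\w_{1\mid C_1'}\cap\mathrm{ker}\,\a_{1\mid C_1'}\cap\mathrm{ker}\,d\a_{1\mid C_1'}$ as an extension of the $\cF_1$-distribution by the pullback under $\pi_1$ of the corresponding characteristic distribution on $C_2$. Because $\w_{1\mid C_1}=\pi_1^*\w_2$ and $\a_{1\mid C_1}=\pi_1^*\a_2$ (strictness of the first reduction), these three kernel conditions on $C_1'$ pull back exactly the three kernel conditions defining $\cF_2$ on $C_2$, modulo the $\cF_1$-directions which are already in all three kernels. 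Hence the distribution has constant rank and integrates to a foliation $\cF_1'$ on $C_1'$; its leaf space fits into the tower $C_1'\to C_2\to M_3$, so $C_1'/\cF_1'\cong C_2/\cF_2=M_3$ carries a manifold structure, giving hypotheses (2) and (3).

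Finally I would identify the reduced l.c.s.\ structure. By Lemma \ref{lem:red} the reduction of $(M_1,\Phi_1,\alpha_1)$ by $C_1'$ produces a l.c.s.\ structure of the first kind on $C_1'/\cF_1'$ characterized by the projection being a strict morphism pulling back $\a_0$ to $\a_{1\mid C_1'}$. Under the identification $C_1'/\cF_1'\cong M_3$ I would verify that this characterizing property coincides with the one defining $(\Phi_3,\alpha_3)$: indeed $\a_{1\mid C_1'}$ descends through $C_2$ to $\a_3$ (since $\a_{1\mid C_1}=\pi_1^*\a_2$ and $\a_{2\mid C_2}=\pi_2^*\a_3$), and the same chain applies to the Lee forms and hence to the l.c.s.\ forms $\Phi$. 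By the uniqueness built into the characterization in Lemma \ref{lem:red}, the two structures agree, and the projected t.i.a.\ and anti-Lee vector fields match $B_3,E_3$ by the compatibility of the $\pi_i$-relations.

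I expect the main obstacle to be step two: carefully showing that the characteristic distribution of $C_1'$ sits in a short exact sequence over the characteristic distribution of $C_2$ with the $\cF_1$-leaf directions as kernel, so that constancy of rank and the manifold structure on the iterated quotient follow. This amounts to a clean bookkeeping of how the three kernel conditions behave under the strict morphism $\pi_1$, using that pullbacks of forms preserve each of the defining kernels while the fiber directions of $\pi_1$ are annihilated by all of $\w_1,\a_1,d\a_1$ on $C_1$; the rest is the routine verification already signalled in the proof of Lemma \ref{lem:red}.
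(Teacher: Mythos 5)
Your proposal is correct and is essentially the argument the paper has in mind: the paper states Lemma \ref{lm:red0} without proof, remarking only that ``a concatenation of reductions is in an obvious way a reduction in just one stage,'' and your verification is exactly the routine check behind that remark. Your three steps --- tangency of $B_1,E_1$ to the saturated submanifold $C_1'$ via $\pi_1$-relatedness, the identification of $\mathrm{ker}\,\w_{1\mid C_1'}\cap \mathrm{ker}\,\a_{1\mid C_1'}\cap \mathrm{ker}\,d\a_{1\mid C_1'}$ with $(d\pi_1)^{-1}(T\cF_2)$ coming from strictness ($\w_{1\mid C_1}=\pi_1^*\w_2$, $\a_{1\mid C_1}=\pi_1^*\a_2$, hence $d\a_{1\mid C_1}=\pi_1^*d\a_2$), and the uniqueness of the reduced data under the surjective submersion $\pi_2\circ\pi_1$ (pullback is injective, so $\a_0=\a_3$, $\w_0=\w_3$, $\Phi_0=\Phi_3$) --- are precisely the checks needed to invoke Lemma \ref{lem:red} for $C_1'$.
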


\begin{proof}[Proof of theorem \ref{thm:red}]

Let $(M,\Phi,\a)$ be a l.c.s. structure of the first kind  in a
manifold of finite type. Let  $k$ be its rank and let $\mu\in
\R^k$ be a basis of its period lattice $\Lambda$.

In a first step we construct a l.c.s. manifold of the first kind with the same
 period lattice and whose Lee form has an appearance close to $\w_\mu$, together with a strongly reductive
  submanifold whose reduction is $(M,\Phi,\a)$.

The finiteness of the first Betti number together with the choice of a basis of the period lattice implies that we can write
\[\w=\w_0+\sum_{j=1}^k \mu_j\w_j,\]
where $[\w_j]$, $j=1,\dots,k$, is integral, the classes $\mu_1[\w_1],\dots,\mu_k[\w_k]$ linearly 
independent over the integers, and $\w_0$ is exact.
We fix $\tau_j\colon M\rightarrow S^1$ such that $\tau_j^*d\theta_j=\w_j$ and define

\[M_1=M\times T^*(\mathbb{T}^k).\]
Let $(\theta_j,r_j)$ be the corresponding coordinates on
$\mathbb{T}^k\times \R^k\cong T^*(\mathbb{T}^k).$ Then it can be
checked that the 1-forms
\begin{equation}\label{eq-1}
\a_1=\a+\sum_{j=1}^k\mu_jr_j(d\theta_j-\w_j),\,\;\;\;\w_1=\w_0+\sum_{j=1}^k\mu_jd\theta_j\end{equation}
define  a l.c.s. structure of the first kind $\Phi_1$  with t.i.a. and anti-Lee vector field respectively
\[B_1=B+\sum_{j=1}^k(i_B\w_j)\frac{\partial}{\partial \theta_j},\;\;\;\,E_1=E+\sum_{j=1}^k (i_E\w_j)\frac{\partial}{\partial \theta_j}.\]
We define $C_1$ to be the image of the embedding
\begin{equation}\label{eq-2}
\begin{array}{rcl}
F\colon M\times \R^k &\longrightarrow & M_1\\
         (x,r_j)&\longmapsto & (x,\tau_j(x),r_j).
\end{array}
\end{equation}
A direct computation shows that $C_1$ is a strong reductive
submanifold of $(M_1,\Phi_1,$ $\a_1)$ and that the reduction is
isomorphic to $(M,\a,\w,B)$.

In the second step we construct a l.c.s. manifold of the first kind  with the 
same Lee form and whose potential 1-form has an appearance close to $\a_{N,k}$,
 together with a strongly reductive submanifold whose reduction is $(M_1,\Phi_1,\a_1)$.

 We define
\[M_2=\R\times \cJ^1M_1,\]
with coordinate $s$ for the first factor and $u$ for the real factor of the 1-jet bundle.
The 1-forms
\begin{equation}\label{eq-3}
\w_2=ds+\w_1,\,\;\;\;\a_2=du-\lambda_{M_1}\end{equation}
are seen to define a l.c.s. structure $\Phi_2$ with  t.i.a. and anti-Lee vector field respectively
\[B_2=\frac{\partial}{\partial s}, \;\;\;\;E_2=-\frac{\partial}{\partial u}.\]

As for any l.c.s. structure of the first kind, the vector fields
$B_1,E_1$ have commuting flows. Denote by $C_2$ the open subset of
$\R^2\times M_1$ in which the composition of  both flows is
defined, and embed it in $M_2$ via the map
\begin{eqnarray*}
G\colon C_2&\longrightarrow & M_2\\
         (s,u,x)&\longmapsto & (s,-u,-\a_1(x)).
\end{eqnarray*}
A direct computation shows that $C_2$ is a strongly reductive submanifold
of $(M_2,\Phi_2,$ $\a_2)$ whose reduction is  isomorphic to
$(M_1,\Phi_1,\a_1)$. In fact, if $\phi$ and $\psi$ are the flows of
the vector fields $B_1$ and $E_1$, respectively, then the map
$$C_2\to M_1,\;\;\;(s,u,x_1)\to \psi_u(\phi_s(x_1))$$
induces an isomorphism between the l.c.s. manifold
$(M_1,\Phi_1,\alpha_1)$ and the l.c.s. reduction of
$(M_2,\Phi_2,\alpha_2)$ by the submanifold $C_2.$

In the third step we seek to simplify the formula for the Lee form. To that end we 
will not perform any reduction, just apply an appropriate diffeomorphism of $M_2$.

We take $f_0\colon M\rightarrow \R$ with $\w_0=df_0$. The Lee form can be written
\[\w_2=ds+df_0+\sum_{j=1}^k\mu_jd\theta_j.\]
The diffeomorphism is
\begin{eqnarray*}\label{eq:H}
H\colon M_3:=M_2=\R\times \R\times T^*M_1 &\longrightarrow & M_2\\
         (s,u,\xi,x)&\longmapsto & (s-f_0(x),u,\xi,x).
\end{eqnarray*}
If we pullback the l.c.s. structure we obtain
\[\a_3=\a_2,\,\;\;\w_3=ds+\sum_{j=1}^k\mu_jd\theta_j,\,\;\; B_3=B_2,\,\;\; E_3=E_2.\]

The final step is a further reduction  to make the last simplification of the potential  1-form.
 We take an embedding $M\hookrightarrow\R^{4n}$ which allows us to consider a new embedding $i'\colon M_1=M\times
 T^*\mathbb{T}^k\cong M\times\mathbb{T}^k \times \R^k
 \hookrightarrow  \mathbb{T}^k\times \R^N$, with $N=4n+k.$

 Now, we take the universal l.c.s. manifold  of the first kind

\[(M_{k,N},\Phi_{k,N,\mu},\a_{k,N}).\]

Denote by $\pi\colon M_{k,N}\rightarrow \mathbb{T}^k\times \R^N$
be the bundle map projection. We define
\begin{equation}\label{c4}
C_4=\pi^{-1}(i'(M_1)).
\end{equation}
A final check shows that $C_4$ is a strongly reductive submanifold of
$(M_{k,N},\Phi_{k,N,\mu},$ $\a_{k,N})$ whose reduction is  isomorphic
to $(M_3,\Phi_3,\a_3).$ Thus, using lemma \ref{lm:red0}, we prove
the theorem for the chosen basis $\mu$. 

Different choices of basis are related by the action of $SL(k,\Z)$, which acts on
the corresponding universal manifolds by diffeomorphisms of the first kind. Thus, we rather use $\Lambda$ in 
the notation for our universal manifolds -as in theorem \ref{thm:red}- since it is the $SL(k,\Z)$-orbit what we look at. 
\end{proof}

\section{Universal models for equivariant reduction of l.c.s. structures of the first kind}\label{sec:5}

In this last section we will prove theorem \ref{thm:redeq}, an equivariant version of theorem \ref{thm:red}.
 Let $(M,\Phi,\alpha)$ be a l.c.s. manifold of the first kind with Lee $1$-form $\omega$ and $\psi:G\times M\to M$
 be an action of a Lie group $G$ on $M$. The action is said to be a \emph{l.c.s. action of the first kind}  if each 
automorphism $\psi_g$, $g\in G$, is of the first kind. In such a case, we have that 
 the associated t.i.a. $B$ and the anti-Lee vector field $E$ are $G$-invariant with respect to $\psi.$

Now, we consider $C$ a strong reducible submanifold of $(M,\Phi,\alpha)$ which is $G$-invariant 
with respect to $\psi.$ Denote by $(M_0=C/{\mathcal F},\Phi_0,\alpha_0)$ the reduction 
of $(M,\Phi,\alpha)$ by $C$ (see lemma \ref{lem:red}). Then,  one may easily prove the following result. 

\begin{proposition}\label{prop:action}
Let $\psi\colon G\times M\to M$ be a l.c.s. action of the first kind and $C$ be a $G$-invariant strong
 reducible submanifold of $M$. Then, there exists an  induced l.c.s. action of the first kind 
$\psi_0\colon G\times M_0\to M_0$ of $G$ on the l.c.s. reduced manifold  $(M_0=C/{\mathcal F},\Phi_0,\alpha_0)$ of the first kind.  
\end{proposition}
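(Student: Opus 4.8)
The plan is to show that the reduced action $\psi_0$ is well defined by descending $\psi$ through the projection $\pi\colon C\to M_0$, and then to verify that each descended automorphism is of the first kind by exploiting the characterizing properties of the reduced structure from Lemma \ref{lem:red}. First I would observe that since $C$ is $G$-invariant, the restriction $\psi_g|_C$ is a diffeomorphism of $C$ for each $g\in G$. Because $\psi_g$ is a strict morphism satisfying $\psi_g^*\a=\a$, it preserves the Lee form $\w$ and the potential $\a$, and hence it preserves the kernels $\mathrm{ker}\,\w_{\mid C}$, $\mathrm{ker}\,\a_{\mid C}$ and $\mathrm{ker}\,d\a_{\mid C}$; consequently $\psi_g|_C$ permutes the leaves of the reducing foliation $\cF$. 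Therefore $\psi_g|_C$ is $\cF$-projectable and descends to a diffeomorphism $(\psi_0)_g\colon M_0\to M_0$ satisfying $\pi\circ (\psi_g|_C)=(\psi_0)_g\circ \pi$.

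Next I would check that the assignment $g\mapsto (\psi_0)_g$ is again a smooth left action. Smoothness and the group homomorphism property follow from the corresponding properties of $\psi$ together with the fact that $\pi$ is a surjective submersion: the two maps $(\psi_0)_{gh}\circ \pi$ and $(\psi_0)_g\circ (\psi_0)_h\circ \pi$ agree because both equal $\pi\circ(\psi_{gh}|_C)$, and surjectivity of $\pi$ lets us cancel it. Smoothness of $\psi_0$ as a map $G\times M_0\to M_0$ is obtained by the universal property of the quotient applied to the smooth map $G\times C\to M_0$, $(g,x)\mapsto \pi(\psi_g(x))$, which is constant along $\{g\}\times(\text{leaf})$.

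The remaining point, which is the substantive one, is that each $(\psi_0)_g$ is of the first kind for the reduced structure $(\Phi_0,\a_0)$; that is, it is a strict morphism with $(\psi_0)_g^*\a_0=\a_0$. Here I would use the characterization of the reduced data in Lemma \ref{lem:red}, namely $\a_{\mid C}=\pi^*\a_0$ and $\w_{\mid C}$ projecting to $\w_0$. Pulling back $\a_0$ and using the intertwining relation gives
\[
\pi^*\big((\psi_0)_g^*\a_0\big)=(\psi_g|_C)^*\pi^*\a_0=(\psi_g|_C)^*(\a_{\mid C})=(\psi_g^*\a)_{\mid C}=\a_{\mid C}=\pi^*\a_0,
\]
and since $\pi^*$ is injective on forms (as $\pi$ is a surjective submersion) we conclude $(\psi_0)_g^*\a_0=\a_0$. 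The identical argument applied to $\w$ shows $(\psi_0)_g^*\w_0=\w_0$, so $(\psi_0)_g$ is a strict morphism, and hence $(\psi_0)_g^*\Phi_0=\Phi_0$ as well.

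I expect the main obstacle to be purely bookkeeping rather than conceptual: one must be careful that $\pi^*$ is genuinely injective and that all the forms involved are $\cF$-basic so that the descended identities are meaningful, but these facts are exactly what Lemma \ref{lem:red} already provides. The $G$-invariance of the anti-Lee vector field $E$ and t.i.a.\ $B$ noted before the statement guarantees compatibility with their projections, so no further verification is needed for the reduced t.i.a.\ and anti-Lee vector field to be $\psi_0$-invariant. Thus the result reduces to the routine diagram-chasing sketched above.
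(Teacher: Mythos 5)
Your proof is correct and is precisely the routine verification the paper has in mind: the authors state Proposition \ref{prop:action} without proof (``one may easily prove the following result''), and your argument---descending $\psi_g|_C$ through $\pi$ because it preserves the distribution $\mathrm{ker}\,\w_{\mid C}\cap\mathrm{ker}\,\a_{\mid C}\cap\mathrm{ker}\,d\a_{\mid C}$, then using $\pi^*\a_0=\a_{\mid C}$, strictness of $\pi$, and injectivity of $\pi^*$ for a surjective submersion to conclude $(\psi_0)_g^*\a_0=\a_0$ and $(\psi_0)_g^*\w_0=\w_0$---is exactly the bookkeeping being left to the reader. No gaps; the smoothness and group-action checks via the universal property of the quotient are also handled correctly.
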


If the conditions of proposition \ref{prop:action} hold $(M_0,\Phi_0,\alpha_0,\psi_0)$ is said to 
be  {\it  the equivariant reduction of $(M,\Phi,\alpha,\psi)$ by the submanifold $C$.}

Next, we will prove theorem \ref{thm:redeq}. For this purpose, we will use the following lemma (see \cite{MT97}).

\begin{lemma}\label{Lemma-equiv} Let $G$ be a compact and connected Lie group and $\psi:G\times M\to M$ 
be an action of $G$ on a connected manifold $M$. Then, 
\begin{enumerate}
\item If  $\beta$ is  a $k$-form on $M$ with integral cohomology class,  the average
 $\bar{\beta}=\int_G(\psi_g^*\beta) dg$ represents the same integral class, provided $dg$ 
the invariant Haar measure of total volume $1$. 
\item If $f:M\to S^1$ is a smooth map and the $1$-form $\beta=f^*{d\theta}$ is $G$-invariant,
 then there exists a representation $\varphi\colon G\to S^1$ of $G$ on $S^1$ such that $f$ is
 equivariant with respect to usual action of $S^1$ on itself, that is, 
$$f(\psi_g(x))=\varphi(g)\cdot f(x),\;\; \forall g\in G \mbox{ and } x\in M.$$
\end{enumerate}
\end{lemma}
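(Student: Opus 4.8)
The plan is to treat the two parts separately; part (1) rests on the connectedness of $G$ and part (2) on the connectedness of $M$.

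For part (1), I would first observe that since $\beta$ has an integral cohomology class it is in particular closed, and that averaging commutes with the exterior differential: differentiating under the integral sign (legitimate as the integrand is smooth in $(g,x)$ and $G$ is compact) and using $d\psi_g^*=\psi_g^*d$ gives $d\bar\beta=\int_G\psi_g^*(d\beta)\,dg=0$, so $\bar\beta$ is closed. The heart of the matter is to show $[\bar\beta]=[\beta]$ in $H^k_{\mathrm{dR}}(M)$. Here I would use that $G$ is connected: for each $g$ a path from $e$ to $g$ yields a homotopy from $\mathrm{id}_M$ to $\psi_g$, so $\psi_g$ and the identity induce the same map on (co)homology. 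Rather than interchanging ``taking the cohomology class'' with the integral over $G$ directly, I would compare periods. For any smooth $k$-cycle $z$, Fubini and the identity $\int_z\psi_g^*\beta=\int_{(\psi_g)_*z}\beta$ give $\int_z\bar\beta=\int_G\big(\int_{(\psi_g)_*z}\beta\big)\,dg$; since $(\psi_g)_*z$ is homologous to $z$ and $\beta$ is closed, the inner integral equals $\int_z\beta$, and using $\int_G dg=1$ one concludes $\int_z\bar\beta=\int_z\beta$. As the periods over all cycles agree, the de Rham theorem yields $[\bar\beta]=[\beta]$; in particular $\bar\beta$ represents the same integral class.

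For part (2), writing $S^1=\R/\Z$ with its translation-invariant form $d\theta$, I would fix $g$ and consider $f_g:=f\circ\psi_g$. The invariance hypothesis reads $f_g^*d\theta=f^*d\theta$. Using that $d\theta$ is translation invariant (equivalently $m^*d\theta=\mathrm{pr}_1^*d\theta+\mathrm{pr}_2^*d\theta$ for the multiplication $m\colon S^1\times S^1\to S^1$), the map $h_g:=f_g\cdot f^{-1}\colon M\to S^1$ satisfies $h_g^*d\theta=f_g^*d\theta-f^*d\theta=0$. Since $d\theta$ is nowhere zero, $h_g$ has vanishing differential, hence is locally constant; connectedness of $M$ forces $h_g\equiv\varphi(g)$ for a single element $\varphi(g)\in S^1$, i.e. $f(\psi_g(x))=\varphi(g)\cdot f(x)$ for all $x$.

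Finally I would check that $\varphi\colon G\to S^1$ is a smooth homomorphism. The action law $\psi_{gh}=\psi_g\circ\psi_h$ gives $\varphi(gh)f(x)=f(\psi_{gh}(x))=\varphi(g)\varphi(h)f(x)$, and cancelling the invertible $f(x)$ yields $\varphi(gh)=\varphi(g)\varphi(h)$; smoothness follows from $\varphi(g)=f(\psi_g(x_0))\cdot f(x_0)^{-1}$ for a fixed $x_0$. Thus $\varphi$ is a character of $G$, which is the asserted representation on $S^1$, and $f$ is equivariant. The only genuinely delicate point is the one flagged in part (1)—ensuring that averaging does not move the cohomology class—and the period comparison above is the device I would use to settle it cleanly.
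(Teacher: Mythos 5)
Your proof is correct, but note that there is no in-paper argument to compare it against: the paper states Lemma \ref{Lemma-equiv} with a citation to de Le\'on--Tuynman \cite{MT97} and gives no proof of its own. Measured against the standard argument for part (1) --- which uses connectedness of $G$ to write $\psi_g^*\beta-\beta=d\gamma_g$ via the homotopy operator along a path from $e$ to $g$, and then averages the primitives, at the cost of having to choose $\gamma_g$ smoothly (or at least measurably) in $g$ --- your period-comparison device is a clean way to sidestep exactly that choice: closedness of $\bar\beta$ by differentiation under the integral, Fubini, $\int_z\psi_g^*\beta=\int_{(\psi_g)_*z}\beta$ with $(\psi_g)_*z$ homologous to $z$ (here closedness of $\beta$ enters via Stokes), and de Rham's theorem identifying classes with their periods do all the work. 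Your part (2) is the expected argument and is complete: $h_g:=(f\circ\psi_g)\cdot f^{-1}$ satisfies $h_g^*d\theta=0$, hence is constant on the connected $M$, and the homomorphism property and smoothness of $\varphi$ follow by evaluating at a basepoint; it is worth observing that this part uses neither compactness nor connectedness of $G$, only connectedness of $M$, so your proof is in fact slightly sharper than the hypotheses demand.
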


\begin{proof}[Proof of theorem \ref{thm:redeq}.]
In order to prove this theorem, we will rewrite the proof of theorem \ref{thm:red}, adding the corresponding
 equivariant notions. So, 
like in the proof of theorem \ref{thm:red}, we start with  a decomposition of Lee $1$-form $\omega$
 associated with $(\Phi,\alpha)$ 
$$\omega=\omega_0 + \sum_{j=1}^k \mu_j\omega_j.$$
From lemma \ref{Lemma-equiv}, one deduces that $\omega_j$ and the average 
$\bar{\omega}_j=\int_G\psi_g^*(\omega_j)dg$ represent the same integral cohomology class, for $j=1,\dots,k.$
 Therefore, one may suppose without loss of generality, that $\omega_j$ is $G$-invariant. 

Now, we may consider the map $\tau_j\colon M\to S^1$ which satisfies $\tau_j^*d\theta_j=\omega_j,$ for $j=1,\dots,k.$
 Then, using again lemma \ref{Lemma-equiv}, we can choose for each $j$ an action of $G$ on $S^1$
 (induced by a representation $\varphi_j\colon G\to S^1$) such that the map $\tau_j$  is equivariant, i.e., 
$$\tau_j(\psi_g(x))=\varphi_j(g)\cdot \tau_j(x).$$
We  remark that $d\theta_j$ is $G$-invariant with respect to the action $\bar{\psi}_k\colon G\times {\Bbb T}^k\to {\Bbb T}^k$ given by 
$$(\bar{\psi}_k)_g(\theta_1,\dots, \theta_k)=(\varphi_1(g)\cdot \theta_1,\dots ,\varphi_k(g)\cdot \theta_k), $$
with $(\theta_1,\dots ,\theta_k)\in {\Bbb T}^k.$

Next, we introduce the action $\psi_1\colon G\times M_1\to M_1$ of $G$ on the manifold $M_1=M\times T^*({\Bbb T}^k)$ given by 
$$\psi_1(g, (x,\theta,r))=(\psi_g(x),(\bar{\psi}_{k})_g(\theta),r)\;\; \mbox { with } x\in M \mbox{ and }(\theta,r)\in T^*{\Bbb T}^k.$$

Then, we have that the $1$-forms $\alpha_1$ and $\omega_1$  on $M_1$ given in (\ref{eq-1}) are $G$-invariant 
with respect to $\psi_1.$ Moreover, if $F\colon M\times \R^k\to M_1$ is the embedding described in (\ref{eq-2}),
 the submanifold $C_1=F(M\times \R^k)$ is also $G$-invariant. In addition, under the identification of $M$
 with the reduction of $M_1$ by $C_1$,   the induced action from $\psi_1$  on this reduced space  is just $\psi$. 

Take $M_2=\R\times {\mathcal J}^1M_1$ and the cotangent lift of $\psi_1, $ $T^*\psi_1\colon G\times T^*M_1\to T^*M_1,$ 
and we  construct the action $\psi_2\colon G\times M_2\to M_2$ on $M_2$ 
given by $$(\psi_2)_g((s,u,\alpha_{x_1}))=(s,u,T^*(\psi_1)_{g^{-1}}(\alpha_{x_1}))$$
 for  $(s,u,\alpha_{x_1})\in {\Bbb R}\times {\mathcal J}^1M_1$ and $x_1\in M_1.$  Using the fact that 
$\omega_1$ is $G$-invariant with respect $\psi_1$ and $\lambda_{M_1}\in \Omega^1(M_1)$ is $G$-invariant
 with respect  to $T^*\psi_1,$ we deduce that the $1$-forms $\omega_2$ and $\alpha_2$ on $M_2$ described 
in (\ref{eq-3}) are $G$-invariant with respect to $\psi_2.$ 

Note that, since the vector fields $E$ and $B$ and the $1$-form $\alpha_1$ are invariant, it follows that the submanifold 
$C_2$  is $G$-invariant with respect to the action $\psi_2.$  In fact, under the identification of $M_1$ with 
the reduction of $M_2$ by $C_2$, the  induced action by $\psi_2$ is just $\psi_1.$ 

In the third step of the proof of theorem \ref{thm:red}, we have that the map $H\colon M_3=M_2\to M_2$ defined
 in (\ref{eq:H}) is a diffeomorphism. On the other hand,  one may assume that the real function $f_0$
 is also $G$-invariant. It is sufficient to take
$$\widetilde{f_{0}}=\int_G\psi_g^*(f_0)dg$$
which is $G$-invariant and $d\widetilde{f_0}=\omega_0.$
Thus, the diffeomorphism $H\colon M_3\to M_2$ is equivariant and it induces
 a new action $\psi_3\colon G\times M_3\to M_3$  such that $\alpha_3,\omega_3, B_3$ and $E_3$ are $G$ invariant. 

Since $G$ is compact and $M$ is of finite type, from the Mostow-Palais theorem \cite{Mo,Pa}, we deduce
 that there exist an integer $n$, an orthogonal action of $G$ on ${\Bbb R}^n$ and an equivariant
 embedding $i\colon M\hookrightarrow {\Bbb R}^n$. Therefore, we have an orthogonal action of $G$ 
on ${\Bbb T}^k\times {\Bbb R}^N$ with $N=n+k$ 
$$\bar{\psi}_{k,N}\colon G\times {\Bbb T}^k\times {\Bbb R}^N\to {\Bbb T}^k\times {\Bbb R}^N$$ given by 
$$(\bar\psi_{k,N})_g(z,r,a)=((\bar\psi_{k})_g(z), r, g\cdot a),$$
with  $(z,r,a)\in {\Bbb T}^k\times {\Bbb R}^k\times {\Bbb R}^n.$  
Thus, we may consider the l.c.s. action on $(M_{k,N},\Phi_{k,N,\mu},$ $\alpha_{k,N},\omega_{\mu})$ defined by 
$$(\psi_{k,N})_g(s,u,\gamma_{(z,t)})=(s,u,T^*(\bar{\psi}_{k,N})_{g^{-1}}(\gamma_{(z,t)}))$$
for $(s,u,\gamma_{z,t})\in M_{k,N}$ and $(z,t)\in {\Bbb T}^k\times {\Bbb R}^N.$
Note that the $1$-form $\alpha_{k,N}=du-\lambda_{{\Bbb T}^k\times {\Bbb R}^N}$ is $G$-invariant 
with respect to $\psi_{k,N}$. Moreover, since $d\theta_j$ is $G$-invariant with respect to 
$\bar{\psi}_k$ then $\omega_{\mu}$ is $G$-invariant with respect to $\psi_{k,N}.$ 

The induced embedding $i'\colon M_1=M\times {\Bbb T}^k\times {\Bbb R}^k\to {\Bbb T}^k\times {\Bbb R}^N$ by 
$i\colon M\hookrightarrow {\Bbb R}^n$, is $G$-invariant with respect to $\psi_1$ and $\bar\psi_{k,N}.$
 Thus, $i'(M_1)$ is $G$-invariant with respect to $\bar\psi_{k,N}$. Since the projection
 $\pi\colon M_{k,N}\to {\Bbb T}^k\times {\Bbb R}^N$ is $G$-invariant with respect to 
$\psi_{k,N}$ and $\bar{\psi}_{k,N}$ we conclude that $C_4=\pi^{-1}(i'(M_1))$
 is $G$-invariant with respect to $\psi_{k,N}.$  Finally, under the identification 
of $M_3$ with the reduction of $M_{k,N}$ by $C_4$, the induced action from $\psi_{k,N}$ is just $\psi_3. $

The action of $SL(k,\Z)$ is equivariant w.r.t. to the action of $\bar{\psi}_{k,N}$, so may consider the $SL(k,\Z)$-orbit
and cut down the dependence of the construction from the basis $\mu$ to the lattice $\Lambda$.
\end{proof}
\begin{remark}
 {\rm It is natural to  try to define universal models for Vaisman manifolds via reduction. One obstacle we find is that
our universal l.c.s. manifolds of the first kind  $(M_{k,N},\Phi_{k,N,\mu},$ $\a_{k,N})$ do not seem to admit compatible
Vaisman structures in a straightforward manner. As for the process of reduction itself, l.c.K. coisotropic reduction 
can be defined
in an obvious way: to define l.c.s. reduction one requires a regular foliation with smooth leaf space which
 integrates
$\mathrm{ker}\Phi_{\mid C}$, and requires $\w_{\mid C}$ to be $\cF$-basic. The additional ingredient is an integrable compatible
 almost complex
structure in the leaf space. It is reasonable then to further ask (1) $TC\cap JTC$ to be of constant rank and complementary to 
$\cF$ in $C$, and  (2) the CR structure $(C, TC\cap JTC)$ to be $\cF$-basic, i.e. invariant by flows of vector fields
 tangent to $\cF$.
Of course, what is difficult is to give geometric conditions which imply that l.c.K. coisotropic reduction is possible. 
This was done
in \cite{GOP} for twisted Hamiltonian actions by automorphisms of the structure (preserving $J$ and the conformal class of $\Phi$).
For Vaisman coisotropic reduction, one further adds the requirement of $C$ being stable under the holomorphic flow of $B-iJE$. If
a Vaisman manifold is acted upon by a group of Vaisman automorphisms, then the action  by definition  commutes  with the flow of 
 $B-iJE$, it is twisted  Hamiltonian, and free on the inverse image of zero if this is non-empty \cite{GOP}, so coisotropic Vaisman 
reduction is possible.}
\end{remark}

\section{Conclusions and future work}
Universal models for several types of l.c.s. manifolds associated
with embedding or reduction procedures are obtained. The existence
of these  universal models for embeddings (in the compact case) is related with the search of a universal
model for a compact manifold $M$ endowed with an arbitrary $1$-form $\Theta.$
In this case one may embed the manifold into a sphere $S^{2N-1}$ and the   pullback of the
 standard contact $1$-form on $S^{2N-1}$ is just $\Theta$ (up to the multiplication by a positive constant).
In relation with previous
results, our method allows to cut down  substantially the
dimension of the sphere. In the particular case of a compact contact
manifold $M$, we give a simple proof about how to obtain a contact
embedding (up to the multiplication by a positive constant) from $M$ to
$S^{2N-1}.$

Using these results, we have seen that the universal model (via embeddings) of a
compact exact  l.c.s. manifold with integral period lattice is
the cartesian product $S^{2N-1}\times S^1$ with the standard
l.c.s. structure. In the particular case of a l.c.K. structure
with automorphic potential and integral period lattice on a
compact manifold $M$, we have discussed  the relation between the l.c.s.
embedding of $M$ into $S^{2N-1}\times S^1$ and recent holomorphic embedding results
for this type of manifolds.

Finally, we have obtained that a universal model for a l.c.s.
manifold (of finite type) of the first kind via a reduction
procedure is ${\mathbb R}\times {\mathcal J}^1({\mathbb T}^k\times {\mathbb
R}^N)$  endowed with a suitable l.c.s. structure. An equivariant version of this result has been presented
 at the end of the paper. 

It would be interesting to  pursue the existence of universal models  (for embedding and reduction procedures)
for  arbitrary l.c.s. manifolds.

\appendix
\section{Non-exactness of the Oeljeklaus-Toma l.c.K. structures}\label{appendix}
In this appendix we will show that the Oeljeklaus-Toma l.c.K. manifolds are not exact. 

We briefly recall the construction of the  Oeljeklaus-Toma l.c.K. structures (for details see \cite{OT05,PV} and references
therein):

Let $K$ be an algebraic number field of degree $n$ and let $\sigma_1,\dots,\sigma_n$ be the distinct embeddings of $K$
into $\mathbb{C}$. Assume that $\sigma_1,\dots,\sigma_{n-2}$ are real and  $\sigma_{n-1}$ and $\sigma_n$ are non-real.
 Let $\mathcal{O}_K$
denote the ring of algebraic integers of $K$, which is a rank $n$ free $\mathbb{Z}$-module. Let $\mathcal{O}_K^{*,+}$ denote
the positive units, i.e. those units $u\in \mathcal{O}_K^*$ such that 
\[\sigma_i(u)>0,\,i=1,\dots,n-2.\]
According to  Oeljeklaus and Toma the actions 
\[T_a(z_1,\dots,z_{n-1}):=(z_1+\sigma_1(a),\dots,z_{n-1}+\sigma_{n-1}(a)), \, a\in  \mathcal{O}_K,\]
\[R_u(z_1,\dots,z_{n-1}):=(\sigma_1(u)z_1,\dots,\sigma_{n-1}(u)z_{n-1}), \,u\in  \mathcal{O}_K^{*,+}\]
fit into a free co-compact action of the semi-direct product $\mathcal{O}_K\rtimes \mathcal{O}_K^{*,+}$ on
$\mathbb{H}^{n-2}\times \C$, where $\mathbb{H}$ denotes the upper half plane.
The corresponding quotient 
\[(M_K,J_K):=\mathbb{H}^{n-2}\times \C/\mathcal{O}_K\rtimes \mathcal{O}_K^{*,+}\]
is called an Oeljeklaus-Toma manifold.

Consider the function 
\begin{eqnarray}\label{eq:autfunction}\nonumber
 r\colon \mathbb{H}^{n-2}&\longrightarrow &\mathbb{R}\\
(z_1,\dots,z_{n-2})&\longmapsto & \prod_{i=1}^{n-2}(\mathrm{im}z_i)^{-1}
\end{eqnarray}
and the standard 2-form
\[\Phi_{\mathrm{std}}=dz_{n-1}\wedge d\bar{z}_{n-1}\in \Omega^{1,1}(\C),\]
and define
\[\Omega=\partial\bar{\partial}\phi +\Phi_{\mathrm{std}}.\]
Then $\Omega$ is a K\"ahler form on $\mathbb{H}^{n-2}\times \C$ such that 
\begin{equation}\label{eq:otchar}
T_a^*\Omega=\Omega,\,a\in \mathcal{O}_K,
\end{equation}
and 
\[
R_u^*\Omega =|\sigma_{n-1}(u)|^2\Omega,\,u\in \mathcal{O}_K^{*,+}.
\]
Hence $\mathcal{O}_K\rtimes \mathcal{O}_K^{*,+}$ acts by K\"ahler homotheties giving rise to a multiplicative character $\chi$,
and therefore the K\"ahler form descends to 
a conformal class of l.c.K. structures with Lee class associated to $\chi$ (see lemma \ref{lem:corresp}). We let $\Phi_K$ be a representative of the 
induced conformal class of l.c.K. structures.

Note that $r$ in (\ref{eq:autfunction}) is an automorphic function, but the function $r+z_{n-1}\bar{z}_{n-1}$
 -which is a $dd^c$-potential for $\Omega$-
is not automorphic. That no automorphic potential for $\Omega$ can exist is a consequence of the following result: 
\begin{proposition}\label{pro:otnonexact} The Oeljeklaus-Toma l.c.K. manifold $(M_K,J_K, \Phi_K)$ is non-exact.
\end{proposition}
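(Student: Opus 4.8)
The plan is to use the covering-space correspondence of Lemma \ref{lem:corresp} to convert the question of $d_\omega$-exactness on $M_K$ into one of ordinary de Rham exactness on a well chosen intermediate cover, where non-triviality can be read off topologically. Recall that $(M_K,J_K,\Phi_K)$ being non-exact means precisely that $\Phi_K$ is not $d_\omega$-exact, i.e. $[\Phi_K]\neq 0$ in $H^2_\omega(M_K)$. The whole argument is one about twisted de Rham cohomology; the complex structure enters only through the explicit shape $\Omega=\partial\bar\partial r+\Phi_{\mathrm{std}}$.

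First I would pass to the cover $p\colon M'\to M_K$ associated to the normal subgroup $\mathcal{O}_K\triangleleft \mathcal{O}_K\rtimes\mathcal{O}_K^{*,+}$, so that $M'=(\mathbb{H}^{n-2}\times\C)/\mathcal{O}_K$ and the deck group of $p$ is $\mathcal{O}_K^{*,+}$. Because $\chi(T_a)=1$ for every $a\in\mathcal{O}_K$, as recorded in (\ref{eq:otchar}), the pullback of the Lee form $\omega$ to $M'$ is exact; that is, $M'$ is an exact covering in the sense of Lemma \ref{lem:corresp}, with a scaling function $f'$ satisfying $df'=p^*\omega$. The point of this choice is that if $\Phi_K=d_\omega\alpha$ were $d_\omega$-exact, then the monomorphism of Lemma \ref{lem:corresp} would send $\alpha$ to the globally defined $1$-form $\mathrm{e}^{-f'}p^*\alpha$ on $M'$, whose exterior derivative is exactly the descended form $\Omega$ on $M'$ (indeed $\mathrm{e}^{-f'}p^*\Phi_K$ is the automorphic representative of $\Phi_K$, which is the descent of $\Omega$). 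Thus $d_\omega$-exactness of $\Phi_K$ on $M_K$ would force the honest $2$-form $\Omega$ to be exact on $M'$.

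The second step is to show $[\Omega]\neq 0$ in $H^2(M';\R)$, which then yields the contradiction. Here I would use that $\mathcal{O}_K$ acts by translating the $n$ real coordinates $(\mathrm{Re}\,z_1,\dots,\mathrm{Re}\,z_{n-2},\mathrm{Re}\,z_{n-1},\mathrm{Im}\,z_{n-1})$ through the full lattice $L\subset\R^n$ given by the Minkowski embedding, while fixing the remaining imaginary parts $(\mathrm{Im}\,z_1,\dots,\mathrm{Im}\,z_{n-2})$. Hence $M'$ splits diffeomorphically as $(\R^{>0})^{n-2}\times(\R^n/L)$ and is homotopy equivalent to the torus $\R^n/L$. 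In $\Omega$ the term $\partial\bar\partial r=d(\bar\partial r)$ is exact on $M'$, since $r$ depends only on the fixed imaginary parts and so $\bar\partial r$ descends; therefore $[\Omega]=[\Phi_{\mathrm{std}}]$. Finally $\Phi_{\mathrm{std}}=dz_{n-1}\wedge d\bar z_{n-1}$ is, up to a constant, $dx_{n-1}\wedge dy_{n-1}$, a non-zero constant-coefficient $2$-form on the torus factor $\R^n/L$ (it is detected by integration over the $2$-torus spanned by the $x_{n-1}$- and $y_{n-1}$-circles), and hence represents a non-zero class. This contradicts the exactness of $\Omega$ obtained above, and proves $\Phi_K$ non-exact.

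I expect no serious obstacle once the right cover is in hand; the only genuine idea is to pass to $M'$, where $\omega$ becomes exact yet $H^2$ remains large enough to detect $[\Phi_{\mathrm{std}}]$. The part that demands a little care is checking that $f'$ and $\mathrm{e}^{-f'}p^*\alpha$ are genuinely single-valued forms on $M'$, which is exactly what the vanishing of $\chi$ on $\mathcal{O}_K$ guarantees; the topological computation of $H^2(M')$ is then routine.
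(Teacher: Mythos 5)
Your reduction steps are correct and, in substance, identical to the paper's own argument: the paper phrases exactness as automorphic exactness of $\Omega$ on $\mathbb{H}^{n-2}\times\C$ and then, using that $\chi$ is trivial on $\mathcal{O}_K$ (equation (\ref{eq:otchar})), descends everything to $(\R^{>0})^{n-2}\times\mathbb{T}^n$; you instead invoke Lemma \ref{lem:corresp} directly for the exact covering $M'=(\mathbb{H}^{n-2}\times\C)/\mathcal{O}_K$, which is the same intermediate space. Both routes discard $\partial\bar{\partial} r=d(\bar{\partial} r)$ (legitimately, since $\bar{\partial}r$ descends to $M'$) and reduce the proposition to the non-exactness of the descended $\hat{\Phi}_{\mathrm{std}}$ on $(\R^{>0})^{n-2}\times\mathbb{T}^n$. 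One cosmetic imprecision: $\mathrm{e}^{-f'}p^*\Phi_K$ equals the descent of $\Omega$ only up to a positive multiplicative constant, coming from the ambiguity of the scaling function and of the representative $\Phi_K$ of the conformal class; this is harmless for exactness.

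The genuine flaw is in your final detection step. You claim $[\hat{\Phi}_{\mathrm{std}}]\neq 0$ because it ``is detected by integration over the 2-torus spanned by the $x_{n-1}$- and $y_{n-1}$-circles.'' No such subtorus (indeed no such circle) exists in $\R^n/L$: the lattice $L=\sigma(\mathcal{O}_K)$ is not a product lattice, and since each $\sigma_i$ is an injective field embedding, any $a\in\mathcal{O}_K$ with $\sigma_i(a)=0$ for even one $i$ must be $a=0$; hence $L$ meets the coordinate 2-plane of the $(x_{n-1},y_{n-1})$-directions only in the origin, and the coordinate circles do not close up. The fact you need is still true for every full-rank lattice --- a nonzero constant-coefficient 2-form on $\R^n/L$ is never exact --- but it requires a different justification. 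The paper's device is averaging: if $\hat{\Phi}_{\mathrm{std}}=d\hat{\alpha}$, averaging $\hat{\alpha}$ over the $\mathbb{T}^n$-action and restricting to a torus fiber produces a translation-invariant, hence closed, 1-form whose differential would have to be the nonzero constant form $\hat{\Phi}_{\mathrm{std}\mid\mathbb{T}^n}$, a contradiction. Alternatively, integrate over the 2-tori $T_{ij}$ spanned by pairs of vectors of a lattice basis $e_1,\dots,e_n$ of $L$: the integral of $dx_{n-1}\wedge dy_{n-1}$ over $T_{ij}$ is the corresponding $2\times 2$ minor of $(e_i,e_j)$, and since the $e_i$ span $\R^n$ these minors cannot all vanish. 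With this one-step repair your proof coincides with the paper's.
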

\begin{proof} By lemma \ref{lem:corresp} exactness of $(M_K,J_K,\Phi_K)$ is equivalent to  
\begin{equation}\label{eq:otexact}
 \Omega=d\alpha, \, \alpha\in \Omega^1(\mathbb{H}^{n-2}\times \C)^\chi.
\end{equation}
Because $r$ is automorphic   (\ref{eq:otexact}) is equivalent to 
\begin{equation}\label{eq:otexact2}
\Phi_{\mathrm{std}}=d\alpha, \, \alpha\in \Omega^1(\mathbb{H}^{n-2}\times \C)^\chi.
\end{equation}
Let us assume that (\ref{eq:otexact2}) holds. 

Let us write $\mathbb{H}=\mathbb{R}\times\mathbb{R}^{>0}$  and
\[
\mathbb{H}^{n-2}\times \C={(\mathbb{R}^{>0})}^{n-2}\times \mathbb{R}^{n-2}\times \mathbb{C}.
\]
Because (i) the action by translations of $\mathcal{O}_K$ on $\mathbb{H}^{n-2}\times \C$ is trivial on the factor
${(\mathbb{R}^{>0})}^{n-2}$ ($\sigma_i(a)\in \mathbb{R}$, $a\in \mathcal{O}_K$, $i=1,\dots,n-2$) and (ii)
 $\sigma(\mathcal{O}_K)\subset 
 \mathbb{R}^{n-2}\times \mathbb{C}$
is a lattice of full rank \cite{OT05}, we have 
\[\mathbb{H}^{n-2}\times \C/\mathcal{O}_K\cong {(\mathbb{R}^{>0})}^{n-2}\times {\mathbb{T}}^{n}.\]
According to (\ref{eq:otchar}) the restriction of $\chi$ to $\mathcal{O}_K$ is trivial and thus
both $\alpha$ and $\Phi_{\mathrm{std}}$ descend to forms $\hat{\alpha}, \hat{\Phi}_{\mathrm{std}}$ on 
${(\mathbb{R}^{>0})}^{n-2}\times {\mathbb{T}}^{n}$. By (\ref{eq:otexact2}) we
obtain
\begin{equation}\label{eq:otexact3}
\hat{\Phi}_{\mathrm{std}}=d\hat{\alpha}.
\end{equation}
Note that because $\Phi_{\mathrm{std}}$ is constant, it is invariant by any translation in 
${(\mathbb{R}^{>0})}^{n-2}\times \mathbb{R}^{n-2}\times \mathbb{C}$. In particular 
$\hat{\Phi}_{\mathrm{std}}\in \Omega^2({(\mathbb{R}^{>0})}^{n-2}\times {\mathbb{T}}^{n})$ is invariant by the $\mathbb{T}^{n}$-action.
Fix a Haar measure in  $\mathbb{T}^{n}$ of total volume 1 and denote the average of $\hat{\alpha}$ by $\int\hat{\alpha}$.
Average (\ref{eq:otexact3}) and use the invariance of $\hat{\Phi}_{\mathrm{std}}$ to get
\begin{equation}\label{eq:otexactaver}
\hat{\Phi}_{\mathrm{std}}=d\int \hat{\alpha}.
\end{equation}
Fix any $\{y\}\in {(\mathbb{R}^{>0})}^{n-2}$ and the corresponding torus $\mathbb{T}^{n}:=\{y\}\times \mathbb{T}^{n}$.
 The result of restricting (\ref{eq:otexactaver})
to this torus is  
\begin{equation}\label{eq:otexactaverres}
\hat{\Phi}_{\mathrm{std}\mid \mathbb{T}^{n}}=d(\int \hat{\alpha}_{\mid \mathbb{T}^{n}}).
\end{equation}
 Observe that by construction $\hat{\Phi}_{\mathrm{std}\mid \mathbb{T}^{n}}$ is a non-trivial 2-form. On the other hand
 the restriction $\int \hat{\alpha}_{\mid \mathbb{T}^{n}}$ is an invariant 1-form, and thus its exterior differential must vanish, which
contradicts (\ref{eq:otexactaverres}).

\end{proof}

\end{document}